\newtheorem*{rep@theorem}{\rep@title}
\newcommand{\newreptheorem}[2]{%
\newenvironment{rep#1}[1]{%
 \def\rep@title{#2 \ref{##1}}%
 \begin{rep@theorem}}%
 {\end{rep@theorem}}}
\newtheorem{theorem}{Theorem}[section]
\newtheorem*{theoremp}{Theorem}
\newtheorem{lemma}[theorem]{Lemma}
\newtheorem{conjecture}[theorem]{Conjecture}
\newtheorem*{claim}{Claim}
\newcommand{\R}{\mathbb{R}}
\newcommand{\F}{\mathcal{F}}
\DeclareMathOperator{\conv}{conv}
\title{On a problem by Dol'nikov}
\author{{\sc Jes\'us Jer\'onimo-Castro\footnote{Supported by CONACYT, SNI
38848}}\\[-1.5mm]
 {\footnotesize Facultad de Ingenier\'ia,}\\[-1.5mm]
  {\footnotesize Universidad Aut\'onoma de Quer\'etaro}\\[-1.5mm]
  {\footnotesize Cerro de las Campanas s/n, C.P. 76010}\\[-1.5mm]
{\footnotesize Quer\'etaro, M\'exico}\\[-1.5mm]
   {\footnotesize {\tt jesusjero@hotmail.com}} \and
   {\sc Alexander Magazinov}\footnote{Partially supported by ERC Advanced Research Grant no 267165 (DISCONV) and RFBR grant 13-01-00563} \\ Steklov Mathematical Institute \\[-1.5mm]
    {\footnotesize Russian Academy of Sciences} \\[-1.5mm]
     {\footnotesize 8 Gubkina street, Moscow 119991} \\[-1.5mm]
      {\footnotesize Russia} \\[-1.5mm]
       {\footnotesize \texttt{magazinov-al@yandex.ru}} \and
        {\sc Pablo Sober\'on}\footnote{Partially supported by ERC Advanced Research Grant no 267165 (DISCONV)} \\[-1.5mm]
 {\footnotesize Department of Mathematics} \\[-1.5mm]
 {\footnotesize University of Michigan} \\[-1.5mm]
 {\footnotesize 530 Church Street, Ann Arbor} \\[-1.5mm]
 {\footnotesize MI 48109-1043, US} \\[-1.5mm]
 {\footnotesize \texttt{psoberon@umich.edu}} }
\begin{document}

\maketitle

\abstract{In 2011 at an Oberwolfach workshop in Discrete Geometry,
V. Dol'nikov posed the following problem. Consider three non-empty
families of translates of a convex compact set $K$ in the plane.
Suppose that every two translates from different families have a
point of intersection. Is it always true that one of the families
can be pierced by a set of three points?

A result by R. N. Karasev from 2000 gives, in fact, an affirmative
answer to the ``monochromatic'' version of the problem above. That
is, if all the three families in the problem coincide. In the
present paper we solve Dol'nikov's problem positively if $K$ is
either centrally symmetric or a triangle, and show that the
conclusion can be strengthened if $K$ is an euclidean disk.  We also confirm the conjecture if we are given four families satisfying the conditions above.}

\section{Introduction}

Helly's celebrated theorem \cite{Radon:1921vh, Helly:1923mengen}
gives a characterization of all families of convex sets in $\R^d$
that have a point of intersection.  Namely, it says that \textit{A
finite family $\mathcal{F}$ of convex sets in $\R^d$ has a point
of intersection if and only if every $d+1$ sets of $\mathcal{F}$
have a point of intersection.}  This theorem is optimal in the
sense that the number $d+1$ cannot be improved.  The question then
becomes whether weakening Helly's condition can still give results
on the global intersection structure of the family of convex sets.
This was answered positively by Alon and Kleitman in 1992
\cite{Alon:1992ta}.

We say that a family $\F$ of convex sets in $\R^d$ has
\textit{piercing number} $k$ if $k$ is the smallest positive
integer such that there is a set of $k$ points in $\R^d$ that
intersects every element of $\F$.  We denote this by $\pi (\F) =
k$.  We say that a family  $\F$ of convex sets in $\R^d$ has the
$(p,q)$ property if out of every $p$ sets in $\F$ we can always
find at least $q$ which are intersecting.   Alon and Kleitman gave
a positive answer to the $(p,q)$ conjecture of Hadwiger and
Gr\"unbaum, showing that \textit{for every $p \ge d \ge d+1$ there
is a constant $c = c(p,q,d)$ such that for every family $\F$ of
convex sets in $\R^d$ with the $(p,q)$ property, we have $\pi (
\F) \le c$}.  However, the current bounds on $c$ are astronomical.  This is commonly known as the $(p,q)$ theorem.

Finding precise bounds for $c$ is still an open problem.  Even in
the first non-trivial case, the conjecture is $c(4,3,2) = 3$ but
the best bound so far gives $c(4,3,2) \le 13$
\cite{Kleitman:2001vo} (the existence theorem gives a bound of
over $200$).

The condition $q \ge d+1$ is essential, as a family of
$n$ hyperplanes in general position in $\R^d$ has the $(d,d)$
property but cannot be pierced by less than $\frac{n}{d}$ points.
However, if further conditions are imposed on the family, $(p,q)$
conditions with $q \le d$ can give bounds on the piercing number
of the family.  This can bee seen in the following result of Karasev.

\begin{theoremp}[Karasev, 2000 \cite{Karasev:2000ui}]
Let $K$ be a convex set in the plane.  If $\mathcal{F}$ is a
family of pairwise intersecting translates of $K$, then
$\pi (F) \le 3$.
\end{theoremp}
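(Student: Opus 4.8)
The plan is to recast the piercing problem as a covering problem for the translation vectors. Fix one copy of $K$ and write every member of $\F$ as $K+x$; let $X$ be the set of these centres. Two translates $K+x$ and $K+y$ meet exactly when $x-y\in D:=K-K$, the centrally symmetric difference body, so the pairwise intersection hypothesis is equivalent to the single inclusion $X-X\subseteq D$. Dually, a point $p$ lies in $K+x$ iff $x\in p-K$, so a triple of points pierces $\F$ iff the three corresponding translates of $-K$ cover $X$. Hence it is enough to show: \emph{if $X-X\subseteq K-K$, then $X$ is covered by three translates of $-K$.} Replacing $X$ by its closed convex hull $C:=\overline{\conv(X)}$ keeps the inclusion $C-C\subseteq D$ (as $D$ is closed and convex) and only enlarges what must be covered, so I may assume $X=C$ is a convex body; this simultaneously disposes of infinite families, since $C$ is automatically compact.

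The constraint is cleanest in terms of widths. With $h_A$ the support function and $w_A(u)=h_A(u)+h_A(-u)$ the width, one has $h_{A-A}=w_A$, so $C-C\subseteq K-K$ says precisely that $C$ is \emph{no wider than $K$ in any direction}. As $-K$ has the same width function as $K$, the theorem reduces, with $L:=-K$, to the following.

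\begin{claim}
Let $C,L$ be convex bodies in the plane with $w_C(u)\le w_L(u)$ for every direction $u$. Then $C$ can be covered by three translates of $L$.
\end{claim}

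I expect this claim to be the crux. The naive idea --- cut $C$ into pieces and cover each by one translate of $L$ --- is doomed, because a body narrower than $L$ in every direction need not fit inside a single translate of $L$: a Euclidean disk can be narrower in all directions than an equilateral triangle whose incircle is far too small to contain it. This is exactly why one, or even two, translates do not suffice and the constant three must be earned; the extremal instance is $K$ a triangle, with $L$ a triangle and $C$ the symmetric hexagon $\tfrac12 D$, and it is also the configuration witnessing that three piercing points are sometimes necessary. To prove the claim I would pick three support directions adapted to $L$, slide one translate of $L$ against $C$ in each, and run a continuity (intermediate value / degree) argument to show the three copies can be positioned to cover $C$ simultaneously; the triangle case guides the placement, and the heuristic that triangles are the hardest bodies to cover with (they have least area among all bodies with a prescribed difference body, by the planar Rogers--Shephard inequality $\operatorname{vol}(L-L)\le 6\operatorname{vol}(L)$) suggests reducing the general case to it. The genuine difficulties are making that reduction to a triangle rigorous for an arbitrary, possibly non-symmetric $L$, and verifying the explicit three-translate covering in the extremal hexagon--triangle configuration.
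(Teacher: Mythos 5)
Your opening reduction is correct, and it is exactly the standard translation that the paper itself records in Section~\ref{section:restatement} (Lemmas~\ref{piercing-covering} and~\ref{piercing-distance}): $K+x$ meets $K+y$ iff $x-y\in K-K$, and piercing by three points is equivalent to covering the set of centres by three translates of $-K$. Passing to the convex hull is also legitimate, since $\conv(X)-\conv(X)=\conv(X-X)\subseteq K-K$. The gap is what comes next: your Claim (a convex body $C$ with $w_C\le w_L$ in every direction can be covered by three translates of $L$) is not a stepping stone toward Karasev's theorem --- it \emph{is} Karasev's theorem in an equivalent (indeed formally stronger, infinite-family) form. The width condition $w_C\le w_L$ is the same as $C-C\subseteq L-L$, i.e.\ the same as saying that the family $\{L+x : x\in C\}$ is pairwise intersecting; so the Claim and the theorem coincide modulo the reduction you already performed. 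Everything you offer for the Claim is heuristic: ``pick three support directions, slide a translate of $L$ against $C$ in each, run a continuity/degree argument'' specifies no positions, formulates no topological statement to which a degree argument could apply, and the proposed reduction of general $L$ to the triangle case via the Rogers--Shephard inequality is an analogy, not an implication (an area inequality does not transport a covering of a triangle to a covering by $L$). Your final sentence concedes precisely this, and that concession is accurate: the crux of the theorem is left unproven.

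For comparison, Karasev's actual argument --- whose key step this paper reproduces as the Claim in Section~\ref{section:final-remark} --- takes a different, extremal route rather than a continuity route. If every three members of $\mathcal{F}$ have a common point, Helly's theorem already gives a single piercing point. Otherwise, to each triple $\{K_1,K_2,K_3\}$ with empty intersection one associates a triangle $\triangle x_1x_2x_3$ with $x_i\in K_i$, whose interior is disjoint from $K_1\cup K_2\cup K_3$ and whose side through $x_i$ is parallel to a supporting line of $K_i$; among all such triangles one takes one of maximal area and proves that every member of $\mathcal{F}$ must contain one of its three vertices, so those three vertices pierce the family. Some argument of this extremal type (or another genuinely new idea) is what would be needed to close the hole at the centre of your proposal.
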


In general, if $\F$ is a family of translates of a convex body $K$
in $\R^d$, a $(p,2)$ property is enough to bound the piercing
number.  This was first noted by Kim, Nakprasit, Pelsmajer and
Skokan \cite{Kim:2006gl} with a bound of $\pi(\F) \le 2^d d^d
(p-1)$ and recently an improved bound of $\pi (\F) \le 2^d
\binom{2d}{d}(d\log d + \log \log d + 5d)(p-1)$ was obtained by
Nasz\'odi and Taschuk \cite{Naszodi:2010km}, also showing that the
order of growth of their bound is correct.

More recently, a result by Katchalski and Nashtir \cite{KN11} extends Karasev's
result to more diverse families of convex sets, rather than just
translates of the same body.  We say that two polygons $P,
Q$ are \emph{related} if $P$ is the intersection of $m$
half-planes $a_1, a_2, \ldots, a_k$ and $Q$ is the intersection
of $m$ half-planes $b_1, b_2, \ldots, b_k$, so that each $b_i$ is
a translate of $a_i$.

\begin{theoremp}[Katchalski and Nashtir, 2011]
If $\F$ is a family of pairwise intersecting convex sets in the
plane each of which is related to a fixed $n$-gon, then $\pi(\F)\le 3^{\binom{n}{3}}$.
\end{theoremp}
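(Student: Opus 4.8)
The plan is to encode every member by its support data, reduce ``the subfamily has a common point'' to Helly's theorem applied to finitely many half-planes, and then iterate a single homothety statement once for each triple of edge-directions. Write the fixed $n$-gon as $\bigcap_{i=1}^{n}\{x:\langle u_i,x\rangle\le c_i\}$ with outer unit normals $u_1,\dots,u_n$, so that every $P\in\F$ has the form $P=\bigcap_{i=1}^{n}H_i(P)$ with $H_i(P)=\{x:\langle u_i,x\rangle\le t_i(P)\}$ for some offsets $t_i(P)\in\R$. Since the $u_i$ positively span the plane each $P$ is bounded, and fixing one member $P_0$ and using $P\cap P_0\neq\emptyset$ shows the offsets are bounded below. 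For a subfamily $\mathcal{G}\subseteq\F$ set $t_i^{*}=\inf_{P\in\mathcal{G}}t_i(P)$ and $H_i^{*}=\{x:\langle u_i,x\rangle\le t_i^{*}\}$; then $\bigcap_{P\in\mathcal{G}}P=\bigcap_{i=1}^{n}H_i^{*}$, and by Helly's theorem this intersection is non-empty as soon as every three of the half-planes $H_1^{*},\dots,H_n^{*}$ meet. A triple can fail to meet only when its normals are positively dependent, i.e.\ either two are antipodal or the three positively span the plane. In the antipodal case emptiness would force $t_i^{*}+t_j^{*}<0$, which separates two members of $\mathcal{G}$ and contradicts pairwise intersection; so the only triples that must be controlled are the \emph{active} ones, whose normals positively span the plane.

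Next I would handle one active triple at a time. Fix an active $\{i,j,k\}$ and a subfamily $\mathcal{G}$. Because the three normals positively span the plane, $T_{ijk}(P):=H_i(P)\cap H_j(P)\cap H_k(P)$ is a bounded triangle with the three fixed edge-directions, and as $P$ varies these triangles are positive homothets of one fixed triangle; since $P\subseteq T_{ijk}(P)$, the family $\{T_{ijk}(P):P\in\mathcal{G}\}$ is pairwise intersecting. The key input is the following \emph{Lemma}: any pairwise intersecting family of positive homothets of a triangle can be pierced by three points. Granting it, I pick three such points and split $\mathcal{G}$ into at most three groups according to which point lies in $T_{ijk}(P)$. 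Inside a group all the triangles $T_{ijk}(P)$ share a common point $p$, whence $\langle u_\ell,p\rangle\le t_\ell(P)$ for $\ell\in\{i,j,k\}$ and every $P$ in the group, so $p\in H_i^{*}\cap H_j^{*}\cap H_k^{*}$ for that group.

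Then I would process the at most $\binom{n}{3}$ active triples in turn, each time refining every current group into at most three subgroups as above. After all triples this yields at most $3^{\binom{n}{3}}$ leaf subfamilies. In each leaf every active triple of tightest half-planes meets by construction, and every remaining triple meets automatically by the discussion above; hence by Helly the leaf has non-empty intersection, and a single point chosen there pierces all its members. Collecting one point per leaf pierces $\F$ with at most $3^{\binom{n}{3}}$ points, which is the desired bound.

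The whole argument rests on the Lemma, and this is the hard part: Karasev's theorem applies to \emph{translates}, whereas the triangles $T_{ijk}(P)$ have genuinely different sizes, so it cannot be quoted directly. To prove the Lemma I would use the separating-axis property: two homothets of a triangle are disjoint iff separated by a line parallel to one of the three edges. Thus pairwise intersection means that for each edge-direction the one-dimensional projections of the members pairwise overlap, and one-dimensional Helly supplies, for each of the three directions, a single line $\ell_i$ of that direction meeting every member. Taking the extreme such lines (offsets equal to the infima $t_i^{*}$), the three lines bound a central --- possibly ``inverted'' --- triangle, and the claim is that its three vertices pierce the whole family: if $\bigcap\mathcal{G}\neq\emptyset$ one point already suffices, and otherwise the three vertices of the inverted configuration do the job. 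I expect the main obstacle to be precisely this vertex-containment statement, since it cannot be read off from the membership inequalities of a single member and must instead exploit the global pairwise-intersection structure of the family.
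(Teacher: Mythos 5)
This statement is not proved in the paper at all: it is quoted as background (the Katchalski--Nashtir theorem, the paper's reference [KN11]), so your proposal can only be judged on its own merits. The reduction skeleton you set up is sound and is certainly the right shape for a bound of the form $3^{\binom{n}{3}}$: encoding each member as $\bigcap_i H_i(P)$ with fixed normals, observing via Helly that only triples of normals containing $0$ in their convex hull can obstruct a common point, disposing of antipodal pairs by pairwise intersection of members, and refining the family once per ``active'' triple with a factor of $3$ each time is all correct, including the check that nonemptiness of $H_i^*\cap H_j^*\cap H_k^*$ survives later refinements.

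The genuine gap is exactly where you flagged it: the Lemma that pairwise intersecting positive homothets of a triangle can be pierced by three points. First, note that this Lemma \emph{is} the case $n=3$ of the theorem you are proving (a set related to a triangle is precisely a, possibly degenerate, positive homothet of it), so you have reduced the theorem to its own hardest special case rather than to anything known: Karasev's theorem covers translates only, and for homothets of a general convex body three points do not suffice (pairwise intersecting disks may require four). Worse, the construction you sketch for the Lemma is false. Take the homothets of $T_0=\{x\ge 0,\, y\ge 0,\, x+y\le 1\}$ given by $R_1=\{x\ge 1,\, y\ge 0,\, x+y\le 2\}$, $R_2=\{x\ge 0,\, y\ge 1,\, x+y\le 2\}$, $R_3=\{x\ge 0,\, y\ge 0,\, x+y\le 1.5\}$ and $R_4=\{x\ge 0.7,\, y\ge 0.7,\, x+y\le 1.7\}$. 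They intersect pairwise: $(1,1)\in R_1\cap R_2$, $(1,0)\in R_1\cap R_3$, $(0,1)\in R_2\cap R_3$, $(1,0.7)\in R_1\cap R_4$, $(0.7,1)\in R_2\cap R_4$, $(0.7,0.7)\in R_3\cap R_4$. The extreme lines are $x=1$, $y=1$ and $x+y=1.5$, whose ``inverted'' triangle has vertices $(1,1)$, $(1,0.5)$ and $(0.5,1)$; none of these lies in $R_4$, since $1+1>1.7$ and $0.5<0.7$. So the three vertices of the extreme configuration need not pierce the family (here other triples of points do, e.g.\ $(1,0.7)$ and $(0,1)$ already suffice, so this refutes your construction, not the Lemma itself). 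Until the $n=3$ case is established by a genuinely different argument, the proposal does not prove the theorem.
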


One should note that, for the case of triangles, the result above gives the
precise bound for Karasev's result.  In the case of parallelograms it also shows that a $(2,2)$ property implies a piercing number of one.

We wish to explore the relation of the results above with what is commonly known as
\emph{colourful} theorems.  The classical example of this kind of
results is a notable variation of Helly's theorem, also known as
the colourful version of Helly's theorem

\begin{theoremp}[Lov\'asz, 1982 \cite{Barany:1982va}]
Let $\mathcal{F}_1, \mathcal{F}_2, \ldots, \mathcal{F}_{d+1}$ be
families of convex sets in $\mathbb{R}^d$.  If every $(d+1)$-tuple
$K_1 \in \mathcal{F}_1, K_2 \in \mathcal{F}_2, \ldots, K_{d+1} \in
\mathcal{F}_{d+1}$ has a point of intersection, then there is an
$i_0 \in \{ 1,2, \ldots, d+1\}$ such that all the sets in
$\mathcal{F}_{i_0}$ have a point of intersection.
\end{theoremp}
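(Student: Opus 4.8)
The plan is to argue by contradiction: assume that \emph{none} of the families $\mathcal{F}_1, \ldots, \mathcal{F}_{d+1}$ has a common point, and from this manufacture a single \emph{rainbow} tuple $K_1 \in \mathcal{F}_1, \ldots, K_{d+1} \in \mathcal{F}_{d+1}$ with $K_1 \cap \cdots \cap K_{d+1} = \emptyset$, contradicting the hypothesis. First I would reduce to finite families: if $\bigcap \mathcal{F}_i = \emptyset$, then (using compactness, as in the infinite form of Helly's theorem) some finite subfamily of $\mathcal{F}_i$ has empty intersection, and then by Helly's theorem already some $(d+1)$-element subfamily does. I may therefore replace each $\mathcal{F}_i$ by such a finite subfamily; the colourful hypothesis is inherited by subfamilies, so nothing is lost. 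To see what should happen, it helps to settle $d=1$ by hand: there each $\mathcal{F}_i$ must contain two disjoint intervals, and the linear order of $\R$ forces one of the four rainbow pairs to be disjoint, a contradiction. For general $d$ the linear order must be replaced by a genuinely higher-dimensional separation device.

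The device I would use is Bárány's colourful Carathéodory theorem \cite{Barany:1982va}: if $X_1, \ldots, X_{d+1} \subset \R^d$ are finite and a point $p$ satisfies $p \in \conv X_i$ for every $i$, then there exist $x_1 \in X_1, \ldots, x_{d+1} \in X_{d+1}$ with $p \in \conv\{x_1, \ldots, x_{d+1}\}$. The heart of the argument is a colour-preserving duality that converts the intersection problem into this hull problem, exactly as in the classical passage from Carathéodory's theorem to Helly's theorem. Concretely, from the failure of a common point for colour $i$ one extracts, by a separation/homogenization argument, a finite point set $X_i$ in a dual space whose convex hull encodes the obstruction to a common point of colour $i$; the fact that \emph{every} colour fails should be arranged so that a single obstruction point $p$ lies in $\conv X_i$ for all $i$ simultaneously. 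Applying the colourful Carathéodory theorem to $X_1, \ldots, X_{d+1}$ then returns one dual point per colour, i.e. one set $K_i$ per family, whose combination certifies $K_1 \cap \cdots \cap K_{d+1} = \emptyset$. This rainbow tuple contradicts the colourful hypothesis and closes the argument.

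The main obstacle is precisely this duality translation. One must encode ``a finite family of convex sets has a common point'' as a convex-position statement in a dual space in a way that keeps the colour classes separate, so that the Carathéodory selection returns exactly one representative per family, and one must verify that the infeasibility certificates produced for the individual colours all register the \emph{same} obstruction point $p$, so that $p \in \conv X_i$ holds uniformly. The delicate points here are the same ones that make ordinary separation subtle (two disjoint convex sets need not be separated with a positive gap), which is why the finiteness and compactness reductions of the first paragraph are essential: once each $X_i$ is finite, the colourful Carathéodory theorem applies verbatim and the combinatorial core of the proof goes through.
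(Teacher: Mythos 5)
The paper contains no proof of this statement to compare against: it is quoted purely as background, with the remark that Lov\'asz's proof appeared in B\'ar\'any's paper \cite{Barany:1982va} alongside the colourful Carath\'eodory theorem. So your argument must stand on its own, and as written it does not: it is a strategy whose decisive step is left unexecuted. You say that from the failure of each colour one ``extracts, by a separation/homogenization argument, a finite point set $X_i$ in a dual space,'' and that things ``should be arranged'' so that a single obstruction point $p$ lies in every $\conv X_i$ --- but this translation \emph{is} the proof, and you yourself flag it as ``the main obstacle'' rather than carrying it out. You have correctly identified the historically right route (colourful Helly does follow from colourful Carath\'eodory by LP-type duality), but identifying the route is not the same as walking it.

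Moreover, the version of colourful Carath\'eodory you quote is not the one this encoding needs, and the mismatch is quantitative, not cosmetic. The natural infeasibility certificate is conical, not convex: a finite family of compact convex sets $C_1,\dots,C_n$ in $\R^d$ has empty intersection if and only if one can choose outer normals $u_j$ so that the supporting half-spaces $\{x:\langle u_j,x\rangle\le h_{C_j}(u_j)\}$ already have empty intersection (prove this, e.g., by minimizing $\max_j \mathrm{dist}(x,C_j)$ and examining the subdifferential at the minimum --- a nontrivial step you would need to write out), and by Farkas' lemma this says exactly that the vector $(0,\dots,0,-1)$ lies in the \emph{cone} generated by the lifted points $(u_j,h_{C_j}(u_j))\in\R^{d+1}$. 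Incidentally, this dissolves your worry about forcing a common obstruction point: homogenization makes $p=(0,\dots,0,-1)$ the same for every colour automatically. But the colourful selection must now be performed among conical combinations in $\R^{d+1}$, where the convex-hull colourful Carath\'eodory theorem needs $d+2$ colour classes --- one more than you have. What is needed is the conical colourful Carath\'eodory theorem ($d+1$ classes suffice for cones in $\R^{d+1}$), which is true and proved by the same argument as the convex version, but which you never invoke; with it, the rainbow cone certificate yields one supporting half-space per family with empty common intersection, hence a rainbow tuple $K_1\in\F_1,\dots,K_{d+1}\in\F_{d+1}$ with $K_1\cap\dots\cap K_{d+1}=\emptyset$, the desired contradiction. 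So the outline can be completed, but as it stands the two load-bearing steps --- the duality encoding and the correct (conical) form of the selection theorem --- are respectively missing and misstated.
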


Lov\'asz's proof for the theorem above appeared first in a paper
by B\'ar\'any where the colourful version of Carath\'eodory's
theorem is presented.  Note that if all $\mathcal{F}_i$ are equal,
we get the original statement of Helly's theorem.

Vladimir Dol'nikov asked whether Karasev's result could have a
coloured version in the same spirit \cite{Dol}. Namely, he posed
the following conjecture.

\begin{conjecture}[Dol'nikov's problem]\label{conj:dolnikov}
Let $K$ be a compact convex set in the plane and $\F_1, \F_2,
\F_3$ are three non-empty finite families of translates of $K$.
Suppose that $K' \cap K'' \neq \varnothing$ for every pair $(K',
K'')$ such that $K' \in \F_i, K'' \in \F_j$ and $i \neq j$. Then
there exists $m \in \{1,2,3\}$ such that $\pi (\F_m) \le 3$.
\end{conjecture}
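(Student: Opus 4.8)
The plan is to translate the problem into a statement about point sets in a normed plane, isolate the single genuinely two-dimensional phenomenon, and reduce to Karasev's theorem wherever possible. First I would pass to translation vectors: writing each member of $\F_i$ as $K+v$ and recording only $v$, let $S_i\subset\R^2$ be the resulting finite set. If $D=K-K$ denotes the (centrally symmetric) difference body, then $(K+u)\cap(K+v)\neq\varnothing$ exactly when $u-v\in D$. Assuming $K$ is full-dimensional (the degenerate segment case is one-dimensional and handled directly), let $\|\cdot\|$ be the norm whose unit ball is $D$. The hypothesis becomes $\|u-v\|\le 1$ whenever $u\in S_i$, $v\in S_j$, $i\neq j$. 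A point $p$ pierces $K+v$ iff $v\in p-K$, so $\pi(\F_i)\le 3$ is equivalent to covering $S_i$ by three translates of $-K$; and since $\{x:\|x\|\le 1\}=D=K-K$, the body $-K$ has $\|\cdot\|$-diameter $1$. In this language Karasev's theorem reads: if $\operatorname{diam}_{\|\cdot\|}(S_i)\le 1$ then $\pi(\F_i)\le 3$. Thus it suffices to produce one family of diameter at most $1$, or, failing that, one family whose special shape still admits a three-translate cover.

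The key structural step is a width estimate, taken one direction at a time. For a unit direction $\theta$, let $w(\theta)=h_D(\theta)$ be the width of $K$ in direction $\theta$, and let $\delta_i(\theta)$ be the length of the orthogonal projection of $S_i$ onto the $\theta$-axis. Projecting the cross-condition, any point of $S_i$ and any point of $S_j$ project to within $w(\theta)$ of each other; writing $m_i\le M_i$ for the extreme projections of $S_i$ this gives $M_i-m_j\le w(\theta)$ and $M_j-m_i\le w(\theta)$, whence $\delta_i(\theta)+\delta_j(\theta)\le 2w(\theta)$. Consequently, in every direction at most one family satisfies $\delta_i(\theta)>w(\theta)$. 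Call family $i$ \emph{bad at} $\theta$ in that case, and let $\Theta_i\subseteq\R P^1$ be its (open) bad arc; the three arcs are pairwise disjoint. If some $\Theta_i$ is empty, then $|\langle u-u',\theta\rangle|\le\delta_i(\theta)\le w(\theta)=h_D(\theta)$ for all $u,u'\in S_i$ and all $\theta$, i.e. $\operatorname{diam}_{\|\cdot\|}(S_i)\le 1$, and Karasev's theorem finishes the proof.

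The remaining, and hardest, case is when all three bad arcs are nonempty, so that the family of small diameter rotates with the direction. Here I would argue that a family bad only inside a short arc is nonetheless three-coverable. The width estimate confines each $S_i$ to a slab of $\|\cdot\|$-width at most $1$ in every good direction, while the single-cross-point triangle inequality gives $\delta_i(\theta)\le 2w(\theta)$ in every direction, so $\operatorname{diam}_{\|\cdot\|}(S_i)\le 2$; thus $S_i$ sits in a bounded parallelogram that is long (extent up to $2$) only along its bad arc and narrow (extent at most $1$) transversally. The plan is to split $S_i$ by a line transverse to its bad arc and cover each part by translates of $-K$, using narrowness to keep the count at three, and then to select the family for which this succeeds.

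I expect this last step to be the main obstacle. A set of $\|\cdot\|$-diameter $2$ that is merely narrow in one direction need not be coverable by three translates of $-K$: already for the Euclidean disk a solid block of width one and length two forces more than three, so the covering must genuinely exploit that a bad family is narrow in \emph{two} transverse good directions and that the three bad arcs are disjoint. It is precisely at this point that the additional structure used in the paper, namely $-K=K$ for centrally symmetric $K$, or the explicit three half-plane description of a triangle, removes the obstruction and makes the cover explicit. Carrying out this covering for an arbitrary convex $K$, uniformly over the rotation of the bad family, is where the real difficulty of the full conjecture lies.
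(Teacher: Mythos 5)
You have not proved the statement, and it is worth being clear that the paper does not either: Conjecture~\ref{conj:dolnikov} is left open there, and the paper establishes only the special cases of centrally symmetric $K$ (Theorem~\ref{dolnikov-symmetric}), triangles (Theorem~\ref{main-triangles}), a strengthened disk version (Theorem~\ref{euclidean-balls}), and four colour classes (Theorem~\ref{thm:four-colors}). The parts of your proposal that are carried out are correct and closely parallel the paper's machinery: your first paragraph is exactly the Section~\ref{section:restatement} restatement (Lemmas~\ref{piercing-covering} and~\ref{piercing-distance}, with the difference body $D=K-K$ as unit ball and piercing of $\F_i$ turned into covering $S_i$ by three translates of $-K$); your projection inequality $\delta_i(\theta)+\delta_j(\theta)\le 2w(\theta)$ is precisely the contrapositive of the paper's Lemma~\ref{lemma-triangles-2}, which the paper applies only in the three side directions of $T$; and your continuous-direction dichotomy (at most one ``bad'' family per direction, hence three pairwise disjoint open bad arcs) is a sound strengthening. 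The easy branch is also correct: if some bad arc is empty, then $\langle u-u',\theta\rangle\le h_D(\theta)$ for all $\theta$ gives $u-u'\in D$, so $\F_i$ is pairwise intersecting and Karasev's theorem yields $\pi(\F_i)\le 3$.

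The genuine gap is the branch you flag yourself: when all three bad arcs are nonempty, your plan (split a bad family by a line transverse to its bad arc and cover the pieces) has no argument behind it, and your own $1\times 2$ rectangle example shows it cannot be completed from the diameter-$2$ and width bounds alone --- a set of norm-diameter $2$ that is narrow in all good directions need not admit a three-translate cover of $-K$. The concrete missing idea is that width bookkeeping discards the \emph{location coupling} between families: the cross conditions confine all of $\bigcup_{j\ne i}S_j$ to the intersection $\bigcap_{u\in S_i}(u+D)$, and in particular, if $x,y\in S_i$ realize a large diameter, the other two families lie in a lens-shaped region determined by $x$ and $y$. This is exactly how the paper's Section~\ref{section:euclidean-disks} proceeds for disks (covering the regions $A(r)$ explicitly), and it is where the paper's other special cases inject structure your setup lacks: colourful Helly plus the Jung ball and Gr\"unbaum's three-ball covering for symmetric $K$ (Section~\ref{section-centrally-symmetric1}), and the hexagon $T_1\cap T_2$ with the moving points $K$, $L$, $M$ and the clockwise/counterclockwise hole continuity argument for triangles (Section~\ref{section:triangles}). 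So your proposal is a correct reduction and a reasonable program, but the covering step that would close the hard case is absent --- consistent with the fact that the statement, for general convex $K$, remains open.
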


In other words, consider a finite family
of translates of $K$ coloured with three colours such that
there is at least one translate of each colour. Assume, in addition, that
every two translates of different colour have a non-empty
intersection. Then we can choose a colour such that all translates
of this colour can be pierced by three points.

The purpose of this paper is to confirm
Conjecture~\ref{conj:dolnikov} in two special cases: if $K$ is
centrally symmetric or if $K$ is a triangle.  We also show a much
stronger statement if $K$ is an euclidean disk. That is, the main results
of this paper are the following theorems.

\begin{theorem}\label{dolnikov-symmetric}
Let $K$ be a centrally symmetric compact convex set in the plane
and $\F_1, \F_2, \F_3$ are three non-empty finite families of
translates of $K$.  Suppose that $K' \cap K'' \neq \varnothing$
for every pair $(K', K'')$ such that $K' \in \F_i, K'' \in \F_j$
and $i \neq j$. Then there exists $m \in \{1,2,3\}$ such that $\pi
(\F_m) \le 3$.
\end{theorem}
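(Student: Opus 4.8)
The plan is to translate the problem into a statement about the \emph{centers} of the translates. Since $K$ is centrally symmetric we may assume its center of symmetry is the origin, so that $K=-K$; if $K$ has empty interior it is a segment and the problem is essentially one-dimensional, so assume $K$ is the unit ball of a norm $\|\cdot\|$ on $\R^2$. Writing each translate as $K+p$ and identifying it with its center $p$, one has $(K+p)\cap(K+q)\ne\varnothing$ exactly when $p-q\in K-K=2K$, i.e. when $\|p-q\|\le 2$. Moreover a set $\{z_1,z_2,z_3\}$ pierces the subfamily with center set $P$ precisely when $P\subseteq (z_1+K)\cup(z_2+K)\cup(z_3+K)$. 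Thus, writing $P_1,P_2,P_3$ for the center sets of $\F_1,\F_2,\F_3$, the theorem becomes: if $\|p-q\|\le 2$ for every $p\in P_i$, $q\in P_j$ with $i\ne j$, then some $P_m$ can be covered by three translates of $K$. The monochromatic case (Karasev's theorem) is exactly the assertion that a point set of diameter at most $2$ can be covered by three translates of $K$, and this is the tool I would use to finish once one family has been put in good position.

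Next I would record the key consequence of central symmetry: the cross-intersection condition controls \emph{widths}. For a direction $u$ let $\mathrm{w}_u(S)=\max_{s\in S}\langle u,s\rangle-\min_{s\in S}\langle u,s\rangle$; since $K=-K$ one has $\mathrm{w}_u(K)=2h_K(u)=h_{2K}(u)$. Applying the hypothesis to the points of $P_i$ and $P_j$ extremal in direction $u$ gives $\max_{P_i}\langle u,\cdot\rangle-\min_{P_j}\langle u,\cdot\rangle\le h_{2K}(u)$ together with its symmetric counterpart, and adding these yields
\[
\mathrm{w}_u(P_i)+\mathrm{w}_u(P_j)\le 2\,\mathrm{w}_u(K)\qquad(i\ne j)
\]
for every direction $u$. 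Calling a family \emph{fat} in direction $u$ when $\mathrm{w}_u(P_i)>\mathrm{w}_u(K)$, and setting $S_i=\{u:\mathrm{w}_u(P_i)>\mathrm{w}_u(K)\}$, the inequality says that no two of the $S_i$ share a direction, so $S_1,S_2,S_3$ are pairwise disjoint subsets of the projective line of directions. A family that is never fat satisfies $\mathrm{w}_u(P)\le\mathrm{w}_u(K)$ for all $u$, i.e. $P-P\subseteq 2K$, hence has diameter at most $2$; so if some $S_m=\varnothing$ we are done by Karasev's theorem. By pairwise disjointness, at least one family is fat only in a set of directions of small angular measure (at most $\pi/3$), and one checks separately that each $P_i$ has diameter at most $4$, since $P_i$ lies in a $2$-ball around any point of another (nonempty) family.

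The heart of the argument, and the step I expect to be the main obstacle, is to convert ``fat only in a small set of directions'' into ``coverable by three translates of $K$.'' Two things must be controlled. First, one must upgrade ``small measure of fat directions'' to ``fat directions confined to a narrow cone,'' so that the chosen family $P_m$ is genuinely elongated in one direction and is strictly narrower than $K$ in every transversal direction rather than merely narrow on average. Second, one must cover such an elongated, bounded set (diameter at most $4$, transversal width strictly below the corresponding width of $K$) by three translates of $K$ stacked along the long direction, and verify that three suffice once the cone of fat directions is small enough. I would isolate this as a separate covering lemma, proved by selecting the family whose fat-direction set $S_m$ is smallest, passing to its convex hull $\conv P_m$, and estimating its extent perpendicular to the elongation against $\mathrm{w}_u(K)$. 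Central symmetry is used throughout: it makes the intersection condition equivalent to $\|p-q\|\le 2$, identifies $\mathrm{w}_u(K)$ with $h_{2K}(u)$ so that the width inequalities and the disjointness of the $S_i$ hold, and aligns the covering lemma with translates of $K$. Finally, the degenerate case in which $K$ is a segment is settled directly by the same width inequality, which in one dimension forces the widths of two families to sum to at most $2\,\mathrm{w}_u(K)$ and hence two of the three families to have diameter at most $2$.
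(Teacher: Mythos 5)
Your reduction to centre sets, the width inequality $w_u(P_i)+w_u(P_j)\le 2\,w_u(K)$ for $i\ne j$, the pairwise disjointness of the fat-direction sets $S_i$, the diameter bound, and the base case $S_m=\varnothing$ (handled by Karasev's theorem) are all correct. But the argument stops exactly where the theorem lives: the covering lemma you defer to --- \emph{a set of diameter at most $4$ whose fat directions have angular measure at most $\pi/3$ can be covered by three translates of $K$} --- is never proved, and you yourself flag it as ``the main obstacle.'' Everything preceding it is a few lines of bookkeeping, so what you have is a reduction of the theorem to an unproved statement that is at least as strong as the theorem itself; this is a genuine gap, not a missing detail.

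Moreover, the route you sketch toward that lemma breaks at its first step. You want to upgrade ``fat directions of small measure'' to ``fat directions confined to a narrow cone,'' but $S_m$ is the set of directions in which the difference body $P_m-P_m$ protrudes from $2K$, and this set can be disconnected with widely separated components: for $K$ the Euclidean unit disk, three points pairwise at distance $2+\epsilon$ are fat precisely on three arcs of measure $O(\sqrt{\epsilon})$ centred at three directions $60^{\circ}$ apart, so there need not be any single elongation direction, and the intended picture of three translates stacked along one long axis is unavailable. Worse, for non-round $K$ the Euclidean angular measure of $S_m$ has no uniform relation to the norm: if $K$ is the box $[-N,N]\times[-1,1]$, a pair of points at norm distance $2+\delta$ across the short side of $K$ is fat only on a set of measure $O(\delta/N)$, so ``fat measure at most $\pi/3$'' permits many unrelated super-critical pairs and does not force $P_m$ to be segment-like or transversally thin. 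Any correct version of your covering lemma must cope with all of this; compare Lemma \ref{lemma-triangles-1}, which is exactly such a width-based covering criterion in the (non-symmetric) triangle case and takes most of Section \ref{section:triangles} to prove. The paper's own proofs of Theorem \ref{dolnikov-symmetric} bypass the difficulty: in the first, every rainbow triple has $\rho$-diameter at most $2$ and hence lies in a Jung ball, the colourful Helly theorem then places one entire family inside a single Jung ball, and Gr\"unbaum's theorem covers any Jung ball by three unit $\rho$-balls; the second proof is a direct geometric construction. As written, your proposal does not prove the theorem.
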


\begin{theorem}\label{main-triangles}
Let $T$ be a closed triangle in the plane and $\F_1, \F_2, \F_3$
are three non-empty finite families of translates of $T$.  Suppose
that $K' \cap K'' \neq \varnothing$ for every pair $(K', K'')$
such that $K' \in \F_i, K'' \in \F_j$ and $i \neq j$. Then there
exists $m \in \{1,2,3\}$ such that $\pi (\F_m) \le 3$.
\end{theorem}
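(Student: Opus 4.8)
The plan is to convert the statement into a purely combinatorial covering problem. Since the hypotheses and the conclusion are invariant under nonsingular affine maps, I would first normalise $T$ to the standard triangle $T=\{(x,y):x\ge0,\ y\ge0,\ x+y\le1\}$. Encode a translate $T+v$ by the three numbers $A(v)=v_1$, $B(v)=v_2$, $C(v)=v_1+v_2$; working with the three edge directions one checks that $T+u$ and $T+v$ intersect iff $|A(u)-A(v)|\le1$, $|B(u)-B(v)|\le1$ and $|C(u)-C(v)|\le1$. Writing $P_m=\{v:T+v\in\F_m\}$, the colouring hypothesis says that for $u\in P_i$, $v\in P_j$ with $i\ne j$ all three coordinate differences are at most $1$. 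A point $q$ pierces $T+v$ exactly when $v\in q-T$, a translate of $-T$; hence $\pi(\F_m)\le3$ is equivalent to covering $P_m$ by three translates of $-T$. In the $(A,B)$-plane a translate of $-T$ is a corner triangle $\{A\le s,\ B\le t,\ C\ge s+t-1\}$, and a finite set $S$ fits inside one such triangle iff $g(S):=\max_S A+\max_S B-\min_S C\le1$. Thus the theorem becomes: some $P_m$ can be partitioned into three blocks, each with $g\le1$.

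For a coordinate $X\in\{A,B,C\}$ I call direction $X$ \emph{bad} for family $m$ if the $X$-spread of $P_m$ exceeds $1$. If some family has no bad direction then $P_m-P_m$ lies in the difference body of $T$, so $\F_m$ is pairwise intersecting and Karasev's theorem gives $\pi(\F_m)\le3$. So I may assume every family has a bad direction. The decisive inequality is that for $i\ne j$ the hypothesis yields $\max_{P_i}X-\min_{P_j}X\le1$ and $\max_{P_j}X-\min_{P_i}X\le1$, hence $\mathrm{spread}_X(P_i)+\mathrm{spread}_X(P_j)\le2$. In particular a direction that is bad for one family cannot be bad for another, so the bad directions are attached to distinct families; as there are three of each, they form a perfect matching. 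After relabelling, families $1,2,3$ are bad in directions $A,B,C$ and \emph{thin} (spread $\le1$) in the other two. Geometrically $P_1$ then lies in a $B,C$-slab, $P_2$ in an $A,C$-slab, and $P_3$ in an axis-parallel $A,B$-box.

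It remains to show these three thin families cannot all fail to be $3$-coverable. The core is a sharp covering estimate for a set that is thin in two directions: peeling one corner triangle off the extreme corner of its bounding parallelogram reduces the task to covering a small homothet of $T$, and such a homothet is coverable by two corner triangles up to size $\tfrac34$ and by three (Karasev's extremal case) up to size $1$. This produces a threshold of the form $\tfrac74$ for the sum of the two thin spreads, below which the family is $3$-coverable. The essential subtlety — and the reason the families must be treated together rather than one at a time — is that a single thin family can genuinely require four points (a full unit box is not $3$-coverable), so being thin in two directions is not by itself enough. I would therefore argue by contradiction: assuming all three families are non-$3$-coverable forces each family to (nearly) fill its parallelogram, hence to reach the extreme corner, which drives its \emph{bad}-direction spread up near the sum of its two thin spreads; feeding this back into $\mathrm{spread}_X(P_i)+\mathrm{spread}_X(P_j)\le2$ along the three bad directions then caps each such coordinate's spread in the other two families below the level they need to be non-coverable, a numerical contradiction.

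The main obstacle is exactly this quantitative step: linking the combinatorial covering thresholds (governed by how a triangle is covered by translates of its reflection) to the metric cross-constraints, and in particular establishing that non-$3$-coverability forces the bad-direction spread itself — not merely the sum of the two thin spreads — to be large. To keep the bookkeeping uniform I would exploit the order-three affine symmetry of $T$ that cyclically permutes the directions $A,B,C$ and preserves the class of corner triangles, so that a single covering lemma proved for the axis-parallel box $P_3$ applies verbatim to the two leaning parallelograms $P_1,P_2$; the remaining work is the careful tracking of the extreme points of the three families and the constant-chasing in the final inequality.
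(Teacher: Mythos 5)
Your setup is sound, and in fact it parallels the paper's: the reduction to covering point sets by translates of $-T$, the intersection criterion in the three edge functionals, the inequality $\mathrm{spread}_X(P_i)+\mathrm{spread}_X(P_j)\le 2$ for $i\ne j$, and the resulting perfect matching between families and bad directions are all correct (the paper reaches the same kind of conclusion by a pigeonhole on two failure types rather than a matching, but these outer layers are equivalent in difficulty and interchangeable). Your peeling lemma is also correct: if the two thin spreads of a family sum to at most $7/4$, peeling the extreme corner triangle leaves a homothet of $T$ of size at most $3/4$, which two further translates of $-T$ cover. The problem is that none of this touches the actual core of the theorem.

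The genuine gap is the covering criterion you yourself flag as ``the main obstacle'': the claim that non-$3$-coverability of a family that is thin in two directions forces its bad-direction spread to be large (``near the sum of its two thin spreads''). This is not proved, does not follow from Karasev's theorem or from your peeling lemma, and is precisely the hard content of the result. The paper's entire Section 6 is devoted to establishing the needed statement, Lemma \ref{lemma-triangles-1}: if the widths $h_1\le h_2\le h_3$ of a set in the three side directions satisfy $h_2\le\sqrt3/2$ and $h_1+h_2+h_3\le 3\sqrt3/2$, then three translates of $T$ cover the set. Its proof is a nontrivial construction (the hexagon $T_1\cap T_2$, the points $K,L,M$, the triangles $Q_1,Q_2,Q_3$, and a continuity argument ruling out clockwise/counterclockwise ``holes''); nothing of comparable strength appears in your proposal. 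Note also that your specific cascade needs quantitatively \emph{more} than the paper's lemma: to cap $s_{1C},s_{2C}$ below $3/4$ via $s_{iC}+s_{3C}\le 2$ you would need ``non-coverable $\Rightarrow$ $s_C>5/4$'' (given $s_A,s_B\le1$), which even Lemma \ref{lemma-triangles-1} does not give. By contrast, Lemma \ref{lemma-triangles-1} does close your matching framework if you change the bookkeeping: with exactly one bad direction per family, non-coverability forces $s_{mA}+s_{mB}+s_{mC}>3$ for each $m$, and summing over the three families grouped by direction (using $s_{iX}+s_{jX}\le2$ once and thinness once per direction) gives $9<\sum s_{mX}\le 9$, a contradiction. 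So your architecture can be completed, but only after supplying the covering lemma that is the paper's main work; as written, the proposal is an outline with its central step missing.
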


\begin{theorem}\label{euclidean-balls}
Let $P_1,\,P_2,\, \ldots ,P_k$ be finite families of euclidean
disks of diameter $1$, with $k\geq 2$. Suppose that $B' \cap B''
\neq \varnothing$ for every pair $(B', B'')$ such that $B' \in
P_i, B'' \in P_j$, $i\neq j$.  Then there exists $m \in
\{1,2,\ldots, k\}$ such that $\pi (\bigcup_{i \neq m} P_i) \le 3$.
\end{theorem}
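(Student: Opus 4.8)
The plan is to strip the disks down to their centres and turn the whole statement into a covering problem. For each $i$ let $C_i\subseteq\mathbb{R}^2$ be the set of centres of the disks in $P_i$. Two disks of diameter $1$ intersect exactly when their centres are at distance at most $1$, so the hypothesis becomes $\lVert x-y\rVert\le 1$ whenever $x\in C_i$, $y\in C_j$ and $i\ne j$. A point pierces a disk of diameter $1$ iff it lies within distance $1/2$ of the centre, so $\pi\bigl(\bigcup_{i\ne m}P_i\bigr)\le 3$ is equivalent to covering $\bigcup_{i\ne m}C_i$ by three disks of radius $1/2$. The goal is therefore to produce an index $m$ for which $\bigcup_{i\ne m}C_i$ admits such a cover. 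As a recurring tool I would record that a disk of radius $1/\sqrt3$ is coverable by three disks of radius $1/2$ (three radius-$\tfrac12$ disks meeting at the centre under $120^\circ$), so by Jung's theorem every planar set of diameter $\le 1$ — more generally, every set whose smallest enclosing circle has radius $\le 1/\sqrt3$ — can be pierced by three points. In particular, if some kept family union has diameter $\le 1$ its disks are pairwise intersecting and Karasev's theorem applies directly.

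The core of the argument I would run on the smallest enclosing circle $\mathcal{C}$ of the full centre set $S=\bigcup_i C_i$, of radius $\rho$ and centre $o$. If $\rho\le 1/\sqrt3$ then $S$ already sits in a disk of radius $1/\sqrt3$ and any $m$ works. Otherwise $\mathcal{C}$ is supported by two or three points of $S$ lying on $\partial\mathcal{C}$, and the key observation is that any two support points at distance $>1$ must lie in the same family (the cross condition forbids distances exceeding $1$ between different families). In the two-support case the support points $p,q$ satisfy $\lVert p-q\rVert=2\rho>2/\sqrt3$; they share a family $C_m$, every other centre lies within distance $1$ of both $p$ and $q$, hence inside the lens $B(p,1)\cap B(q,1)$, and I delete $C_m$. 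In the three-support case $o$ lies inside the triangle $p_1p_2p_3$, so the largest central angle is $\ge 2\pi/3$ and the longest side is $\ge\rho\sqrt3>1$; thus at least one side is monochromatic. If all three sides exceed $1$ the three points share a family $C_m$ and the remaining centres lie in $\bigcap_i B(p_i,1)$, a Reuleaux-type region of circumradius $\le 1/\sqrt3$, so deleting $C_m$ finishes; if exactly one side exceeds $1$ I delete the family carrying that side and confine the rest to the associated lens.

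This reduces everything to one covering lemma: the lens $B(p,1)\cap B(q,1)$ with $\lVert p-q\rVert=t$ is coverable by three disks of radius $1/2$. For $t$ comfortably above $1$ the lens is thin enough that three disks strung along its long axis suffice, and a direct computation at, say, $t=2/\sqrt3$ confirms this. The hard part will be the borderline regime where $t$ is only slightly larger than $1$, which genuinely arises in the three-support case when $\rho$ is close to $1/\sqrt3$ but the supporting triangle is skew (one side in the interval $(1,2/\sqrt3)$). There a single central disk cannot cover the fat middle of the lens, so the three disks must be placed off the axis and the covering inequality checked at its tightest point.

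To close that regime I expect to exploit that the kept set is only a \emph{subset} of the lens and inherits further constraints: it must lie within distance $1$ of the deleted family and of the third support colour, and each kept family has bounded diameter, which together pin the kept centres into a region of circumradius $\le 1/\sqrt3$ rather than the whole lens. Making this quantitative — establishing the lens/region covering statement exactly down to the threshold forced by the enclosing-circle geometry, and verifying that the inherited constraints are always strong enough — is the crux of the proof, and is where I would spend most of the technical effort.
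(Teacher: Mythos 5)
Your reduction to covering centres, the easy case where the smallest enclosing circle has radius $\rho\le 1/\sqrt3$, and the two-support case (which matches the paper's case (1a): the lens $A(t)$ with $t\ge 2/\sqrt3$ really is coverable by three radius-$\tfrac12$ disks, though even that needs an explicit construction you defer) are all sound. The genuine gap is exactly the regime you flag as ``the crux'': $t$ slightly above $1$. This is not merely where technical effort is needed --- the covering lemma you reduce everything to is \emph{false} there, so no computation can close it as stated. Concretely, the lens $A(1)=B((-\tfrac12,0),1)\cap B((\tfrac12,0),1)$ cannot be covered by three disks of radius $\tfrac12$: with $v_\pm=(0,\pm\tfrac{\sqrt3}{2})$ and $e_\pm=(\pm\tfrac12,0)$, no disk may contain $\{v_+,v_-\}$ (distance $\sqrt3$), and no disk may contain $\{v_+,e_+,e_-\}$ or $\{v_-,e_+,e_-\}$ (equilateral of side $1$, circumradius $1/\sqrt3>1/2$); moreover a radius-$\tfrac12$ disk containing two points at distance exactly $1$ is forced to be the disk on that diameter, and each such forced disk meets the two bounding unit circles only at those two points. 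A short case analysis over the possible assignments of $e_\pm$ to the three disks then always leaves one of the four $60^\circ$ boundary arcs joining consecutive points of $\{v_+,e_+,v_-,e_-\}$ covered only at its endpoints, a contradiction. By compactness (take limits of the three disks as $t\downarrow 1$, using that $A(t)\to A(1)$ in Hausdorff distance), $A(t)$ stays uncoverable for all $t$ sufficiently close to $1$. This bad regime genuinely arises in your one-long-side case, and also in the case you skipped --- exactly two sides of the support triangle longer than $1$, where two support points may be very close and the triple intersection degenerates into precisely such a thin lens.

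The extra constraints you hope will rescue the thin-lens case are partly unavailable. A kept centre need \emph{not} be within distance $1$ of the third support point $p_3$: the family of $p_3$ is kept, and intra-family distances are unconstrained, so the kept union is confined only to the lens of the deleted pair, not to the triple intersection. The paper escapes this trap with a different selection rule, and that rule is the real content of its proof: it deletes a family of \emph{maximum} diameter $r$. Then every kept family has diameter $\le r<2/\sqrt3$ while cross-family distances are $\le 1$, so the whole kept union has diameter $<2/\sqrt3$; hence inside $A(1)$ it lies entirely above the horizontal line at height $-1/\sqrt3$ or entirely below the one at height $1/\sqrt3$, and the paper covers that \emph{truncated} lens by an explicit three-disk construction (its case (1b)). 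Your rule --- delete the family containing the long support pair of the global enclosing circle --- provides no diameter maximality for the deleted family (a kept family may be much wider than the deleted one), so the paper's fix does not transfer; the only truncation your setup supplies is by the enclosing disk $B(o,\rho)$, and proving that $A(t)\cap B(o,\rho)$ is always three-coverable is precisely what you leave open. As it stands, the proposal settles the easy cases and correctly locates the difficulty, but the difficulty it locates is the theorem.
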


For Theorem~\ref{dolnikov-symmetric} we give two different proofs.
The first uses Helly's colourful theorem, and the second is
constructive.

It should be noted that colourful version of the $(p,q)$ theorem
have been studied recently by B\'ar\'any, Fodor, Montejano,
Oliveros and P\'or \cite{BMF13}.  They have sharp results in
dimension one and an existence similar to the $(p,q)$ theorem for
general $d$.  Since they work with general
convex sets, their results require $q \ge d+1$.

We believe that the strong conclusion of Theorem \ref{euclidean-balls} holds for any convex set $K$.

\begin{conjecture}
Let $K$ be a compact convex set in the plane and $\F_1, \F_2,$
$\ldots, \F_k$ are non-empty finite families of translates of $K$.
Suppose that $K' \cap K'' \neq \varnothing$ for every pair $(K',
K'')$ such that $K' \in \F_i, K'' \in \F_j$ and $i \neq j$. Then
there exists $m \in \{1,2,\ldots, k\}$ such that $\pi (\bigcup_{i
\neq m}\F_i) \le 3$.
\end{conjecture}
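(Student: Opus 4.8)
The plan is to pass to the standard ``centres plus difference body'' reformulation and then turn the piercing statement into a covering statement, after which everything is governed by the support function of $K$. Fix a reference translate and record each member of $\F_i$ by its translation vector, obtaining a finite point set $X_i \subset \R^2$. Writing $D = K - K$ for the (centrally symmetric) difference body, two translates $K+x$ and $K+y$ meet precisely when $x - y \in D$, so the colour condition becomes $X_i - X_j \subseteq D$ for all $i \neq j$. Dually, a point $p$ pierces $K+x$ exactly when $x \in p - K$, so $\pi\bigl(\bigcup_{i\neq m}\F_i\bigr) \le 3$ is equivalent to covering $\bigcup_{i\neq m} X_i$ by three translates of $-K$. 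Thus the whole problem reads: choose the colour $m$ to discard so that the remaining centres are covered by three translates of $-K$.

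The key quantitative tool I would use is a width trade-off read off from support functions. For a unit direction $u$ let $w_i(u)$ be the width of $X_i$ in direction $u$ and set $\mathrm{width}_K(u)=h_K(u)+h_K(-u)$. Since $X_i - X_j \subseteq D$ and $X_j - X_i \subseteq D$, comparing the extreme points of $X_i$ and $X_j$ in direction $u$ and using $h_D(u)=h_K(u)+h_K(-u)$ gives
\[
w_i(u) + w_j(u) \le 2\,\mathrm{width}_K(u) \qquad \text{for every } u \text{ and all } i \neq j .
\]
In particular two colour classes can never both be \emph{fat} (wider than $K$) in the same direction, so at most one class is fat in every direction; moreover each $X_i$ is confined to $\bigcap_{j\neq i}\bigcap_{y\in X_j}(y+D)$, an intersection of translates of $D$. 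I would discard the unique class (if any) that is fat in all directions. Then every surviving class is \emph{thin} in at least one direction, and the mutual confinement should pin the surviving union inside a thin lens-shaped region, namely an intersection of two translates of $D$.

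The crux --- and the step I expect to be the real obstacle --- is the final covering lemma: a region of this kind, confined to a translate of $D$ and of $K$-width at most $\mathrm{width}_K(u)$ in some direction $u$, is covered by three translates of $-K$. For the euclidean disk the reason three points suffice is transparent: by Jung's theorem a planar set of diameter $1$ lies in a disk of radius $1/\sqrt3$, the extremal three-disk covering of a disk has radius ratio $\sqrt3/2$, and $\tfrac{\sqrt3}{2}\cdot\tfrac{1}{\sqrt3}=\tfrac12$ matches on the nose; the confined regions are lenses rather than full disks, but their roundness lets the same threshold do the work. For a general $K$ this roundness is lost: the difference body is not round, the Rogers--Shephard inequality $|D|\le 6|K|$ shows that covering all of $D$ would need at least six translates of $-K$, and only the thin-lens structure --- not diameter or area --- can force the count down to three. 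Making this covering lemma work for arbitrary, and especially non-smooth or polygonal, $K$ is the heart of the difficulty.

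Finally I would arrange the induction on the number of colours so that the hardest instances are the smallest. Extra colour classes only add cross-constraints and tighten the confinement, so the extremal case is $k=2$, where $\bigcup_{i\neq m}\F_i$ is a single class and the trade-off already delivers one class thin in some direction, reducing matters to the covering lemma. A secondary difficulty is that the thin direction may vary from class to class, so for $k\ge 3$ selecting a single $m$ whose removal leaves the \emph{union} of survivors controllable seems to need a topological selection over the circle of directions (a continuity or Borsuk--Ulam-type argument) rather than a naive extremal choice. The sharp one-dimensional colourful $(p,q)$ results of B\'ar\'any, Fodor, Montejano, Oliveros and P\'or \cite{BMF13} indicate what the projected, direction-by-direction statements should be, and the plan is to lift these to the plane through the covering lemma above.
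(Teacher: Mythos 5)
First, a point of order: the statement you were asked to prove is not a theorem of this paper at all --- it is posed there as an open conjecture. The authors establish it only when $K$ is a euclidean disk (Theorem \ref{euclidean-balls}); for general $K$ they prove only the weaker single-family conclusions (Theorems \ref{dolnikov-symmetric}, \ref{main-triangles} and \ref{thm:four-colors}). So there is no proof in the paper to compare against; your argument would have to stand on its own as a new result, and it does not. The parts you carry out correctly are exactly the easy parts: the translation of piercing into covering centres by translates of $-K$ is the paper's Section \ref{section:restatement} (Lemmas \ref{piercing-covering} and \ref{piercing-distance}), and your width inequality $w_i(u)+w_j(u)\le 2\,\mathrm{width}_K(u)$ is the same trade-off the paper exploits via Lemma \ref{lemma-triangles-2}. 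Everything beyond that is a plan with the decisive steps missing, as you yourself acknowledge.

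Moreover, two of the planned steps are not merely unproven but defective. (a) Your covering lemma is false as stated. Take $K$ the equilateral triangle of unit side with a horizontal side, so that $D=K-K$ is the regular hexagon with vertices $(\pm 1,0)$, $\bigl(\pm\tfrac12,\pm\tfrac{\sqrt3}{2}\bigr)$, and let $u$ be the vertical direction, so $\mathrm{width}_K(u)=\tfrac{\sqrt3}{2}$. The region $L=D\cap\{|y|\le \tfrac{\sqrt3}{4}\}$ is confined to a translate of $D$ and has width exactly $\mathrm{width}_K(u)$ in direction $u$, yet its area is $\tfrac{7\sqrt3}{8}=\tfrac72\,|K|>3\,|K|$, so no three translates of $-K$ can cover it. And the confinement genuinely available in the problem can be exactly this weak: if the discarded class $X_m$ is a single point, it confines the survivors only to one translate of $D$, not to a thin lens $D\cap(D+v)$, so any usable lemma must exploit finer structure --- compare Lemma \ref{lemma-triangles-1}, which requires thinness in \emph{two} of the three side directions plus a sum condition, not thinness in one direction. (b) The reduction to $k=2$ is invalid: applying a (hypothetical) two-class result to the pair $\bigl(\F_m,\bigcup_{i\neq m}\F_i\bigr)$ concludes only that \emph{one} of the two is pierced by three points; if it is $\F_m$, you learn nothing about the union you actually need to pierce, and as far as the two-class statement is concerned this can happen for every choice of $m$ simultaneously, while recursing on the survivors inflates the number of piercing points beyond three. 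Finally, the selection of $m$ for $k\ge 3$ (your ``topological selection over the circle of directions'') is left entirely open. In short, the proposal correctly locates the difficulty but does not remove it; the statement remains, as in the paper, a conjecture.
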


The paper is structured as follows.  In Section \ref{section:restatement}, we show that the theorems
above can be translated to statements regarding covering families
of points with few translated copies of $-K$ instead of piercing
families of translated copies of $K$ with few points.  

In sections
\ref{section-centrally-symmetric1} and
\ref{section-second-proof} we give two different proof of
Theorem \ref{dolnikov-symmetric}.  In Section
\ref{section:euclidean-disks} we give the proof of Theorem
\ref{euclidean-balls}.  In section
\ref{section:triangles} we give the proof for Theorem
\ref{main-triangles}.  Finally, in section \ref{section:final-remark} we prove Dol'nikov's conjecture if we use four colours instead of three, by following directly the arguments used by Karasev in \cite{Kar08}.

\begin{theorem}\label{thm:four-colors}
Let $K$ be a compact convex set in the plane and $\F_1, \F_2,
\F_3, \F_4$ are four non-empty finite families of translates of $K$.
Suppose that $K' \cap K'' \neq \varnothing$ for every pair $(K',
K'')$ such that $K' \in \F_i, K'' \in \F_j$ and $i \neq j$. Then
there exists $m \in \{1,2,3,4\}$ such that $\pi (\F_m) \le 3$.
\end{theorem}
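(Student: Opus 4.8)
The plan is to dualise the statement exactly as in Section~\ref{section:restatement} and then to re-run Karasev's argument from \cite{Kar08} in a colour-sensitive fashion, the guiding principle being that four colours is the natural regime $d+2=4$ (with $d=2$) in which rainbow intersection hypotheses, as opposed to monochromatic ones, tend to suffice; this is precisely why three colours (the regime $d+1=3$) remains the hard case of Conjecture~\ref{conj:dolnikov}. Concretely, writing each translate as $K+x$, two translates $K+x$ and $K+y$ meet iff $x-y\in D$, where $D=K-K$ is the centrally symmetric difference body, and a point $p$ pierces $K+x$ iff $x\in p+(-K)$. Thus the hypothesis becomes the \emph{rainbow} condition that $x-y\in D$ whenever $x,y$ are centres of translates of different colours, while the conclusion $\pi(\F_m)\le 3$ becomes: the centre set $S_m$ of colour $m$ is covered by three translates of $-K$. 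In this language Karasev's theorem~\cite{Karasev:2000ui} reads: a centre set of $D$-diameter at most $1$ is covered by three translates of $-K$.

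Before invoking the full machinery I would first note why the naive contradiction fails, since this dictates the shape of the correct argument. Assuming $\pi(\F_m)\ge 4$ for all $m$, Karasev's theorem gives in each colour a disjoint pair of translates, i.e. centres $a_m,b_m$ with $a_m-b_m\notin D$, while all cross-colour pairs of centres lie in $D$. One is tempted to contradict this, but the resulting $8$-point pattern (four ``far'' pairs, all cross pairs ``close'') is in fact realisable: taking $a_m=\tfrac{d}{2}u_m$, $b_m=-\tfrac{d}{2}u_m$ with $u_m$ in the four directions $0,45,90,135$ degrees and $d$ slightly above $1$ satisfies every constraint when $D$ is the euclidean disk. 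Hence one disjoint pair per colour is too weak, and one must use the full strength of non-pierceability, which is exactly what Karasev's proof in \cite{Kar08} manipulates: to a family of translates that cannot be pierced by three points it associates a genuine obstruction built from the directions of $S^{1}$ and the extreme centres of the family in those directions, an obstruction controlled entirely by $D$ and by the pairwise relations among centres.

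Accordingly, I would feed all four centre sets $S_1,\dots,S_4$ simultaneously into that construction, using the rainbow condition to pin down the cross-colour relations, and argue that the four would-be obstructions cannot coexist: three points already suffice generically, so the presence of a fourth colour over-determines the configuration and forces one colour's obstruction to be trivial, whence that class is pierceable by three points. The main obstacle will be making Karasev's argument genuinely colour-aware. In \cite{Kar08} the family is homogeneous and every pair of sets meets; here intersection is guaranteed only \emph{across} colours, and two translates of one colour may be disjoint, so the delicate point is to verify that the rainbow hypothesis alone drives the degree/centre-point step for at least one colour. A secondary but real difficulty, since $K$ need not be symmetric, is to keep the bodies $D=K-K$ (which governs intersections) and $-K$ (which governs piercing) separate throughout — they coincide up to homothety only in the centrally symmetric case of Theorem~\ref{dolnikov-symmetric}. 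Once the topological step is arranged to read off a colour whose obstruction vanishes, exhibiting the three piercing points is routine through the covering reformulation of Section~\ref{section:restatement}.
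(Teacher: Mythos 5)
Your overall instinct matches the paper's --- Theorem~\ref{thm:four-colors} is indeed proved ``by following directly the arguments used by Karasev in \cite{Kar08}'' --- and your eight-point configuration showing that one disjoint pair per colour yields no contradiction is correct and genuinely instructive. But the proposal stops exactly where the proof has to start. The sentences ``argue that the four would-be obstructions cannot coexist'' and ``the presence of a fourth colour over-determines the configuration'' are aspirations, not arguments, and you yourself flag the only substantive step --- making Karasev's machinery run when intersections are guaranteed only \emph{across} colours --- as an unresolved obstacle. Moreover, the machinery you gesture at (a degree-type obstruction built from directions of $S^1$ and extremal centres, in the dualized picture) is that of Karasev's 2000 paper \cite{Karasev:2000ui}, not the argument of \cite{Kar08} that actually does the job here; no dualization and no topological step are needed at all.

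The missing idea is concrete and elementary. In \cite{Kar08}, to every triple $\{K_1,K_2,K_3\}$ of pairwise intersecting translates with $K_1\cap K_2\cap K_3=\varnothing$ one associates a triangle $\triangle x_1x_2x_3$ with $x_i\in K_i$, interior disjoint from $K_1\cup K_2\cup K_3$, and each side's opposite vertex lying on a supporting line of the corresponding translate; Karasev proves that if such a triangle is chosen with \emph{maximal area}, then any further translate intersecting all three members of the triple must contain one of the three vertices (otherwise swapping it into the triple produces a strictly larger admissible triangle). Now restrict the maximization to \emph{rainbow} triples, i.e.\ triples whose members come from three distinct families: let $\triangle x_1'x_2'x_3'$ be maximal, say with $K_i\in\F_i$, $i=1,2,3$. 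The whole point of having a fourth colour is that every $K_4\in\F_4$ is cross-coloured to all three members of this triple, so the pairwise intersections Karasev needs are supplied by the hypothesis for free, and any triple obtained by swapping $K_4$ for one of $K_1,K_2,K_3$ is again rainbow, hence admissible in the maximization. Maximality then forces $K_4$ to contain one of $x_1',x_2',x_3'$, giving $\pi(\F_4)\le 3$. (In the degenerate case where no rainbow triple has empty intersection, the colourful Helly theorem already yields some $m$ with $\pi(\F_m)=1$.) This also pinpoints why three colours stay hard: there, a swapped-in translate would share a colour with a member of the triple, and two translates of the same colour need not intersect, so the maximal-triangle argument breaks precisely at the step your proposal left open.
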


\section{Restatement of Conjecture~\ref{conj:dolnikov}}\label{section:restatement}

The purpose of this section is to fix notation and give an equivalent version of theorems \ref{dolnikov-symmetric}, \ref{main-triangles} and \ref{euclidean-balls} which are easier to analyse.

Given a family of translates of a convex set $K$, statements about its piercing number have a natural translation to statements regarding a family of points being covered with few copies of $-K$.  This is a standard technique, see for instance \cite{Karasev:2000ui}.

We should also mention that
lemmas~\ref{piercing-covering}~and~\ref{piercing-distance} below can be
directly generalised to the case of $\R^d$.

\begin{lemma}\label{piercing-covering}
Let $K$ be a planar compact convex set. Let
$$\F = \{K + t_1, K + t_2, \ldots, K + t_n \}$$
be a family of translates of $K$. Suppose that $\pi(\F) = p$. Then
the points $t_1, t_2, \ldots, t_n$ can be covered by $p$
translates of $-K$.
\end{lemma}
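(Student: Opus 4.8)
The plan is to establish a duality between piercing and covering. I want to show that if a family $\F = \{K + t_1, \ldots, K + t_n\}$ of translates can be pierced by $p$ points, then the $n$ translation vectors $t_1, \ldots, t_n$ can be covered by $p$ copies of $-K$.

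Let me think about the core observation. A point $x$ pierces the translate $K + t_i$ exactly when $x \in K + t_i$, which means $x - t_i \in K$, i.e., $t_i \in x - K = x + (-K)$. So a single piercing point $x$ corresponds to a single translate $x + (-K)$ of $-K$, and the set of indices it pierces equals the set of vectors $t_i$ that fall inside that translate. This is the key step and the only nontrivial idea.

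So here is how I would carry it out. Suppose $\pi(\F) = p$, witnessed by a set of points $\{x_1, x_2, \ldots, x_p\}$ meeting every member of $\F$. For each $j$, form the translate $(-K) + x_j$ of $-K$. I claim these $p$ translates cover all the vectors $t_1, \ldots, t_n$. Indeed, take any $t_i$; since the piercing set meets $K + t_i$, there is some $x_j \in K + t_i$, and by the equivalence $x_j \in K + t_i \iff t_i \in (-K) + x_j$, the vector $t_i$ lies in $(-K) + x_j$. Hence every $t_i$ is covered, and the $p$ translates $(-K) + x_1, \ldots, (-K) + x_p$ do the job.

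The argument is essentially a one-line set manipulation, so I do not anticipate a genuine obstacle; the only thing to be careful about is the direction of the sign, namely that it is $-K$ rather than $K$ that covers the vectors, since the piercing point enters as $x + (-K)$ and not $x + K$. No compactness or convexity of $K$ is used beyond what is needed to make the translates well-defined, which is why, as the excerpt notes, the lemma generalises verbatim to $\R^d$.
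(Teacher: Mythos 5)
Your proof is correct and is essentially identical to the paper's: both take a piercing set $x_1,\ldots,x_p$, observe that $x_j \in K + t_i$ is equivalent to $t_i \in (-K) + x_j$, and conclude that the translates $(-K)+x_1,\ldots,(-K)+x_p$ cover the $t_i$. The sign-flip equivalence you isolate as the key step is exactly the paper's central computation, so there is nothing to add.
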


\begin{proof}
Assume that the points $x_1, x_2, \ldots, x_p$ pierce the family
$\F$. For an arbitrary $1\le i\le n$ the translate $K + t_i$ is
pierced by some $x_j$. Then there is a point $a \in K$ such that $a + t_i = x_j.$

Therefore $x_j - t_i \in K$. This immediately implies $t_i - x_j
\in - K$ and, in particular, $t_i \in (- K + x_j)$.  Thus, every $t_i$ is covered by at
least one of the $p$ translates $- K + x_1, - K + x_2, \ldots, - K + x_p$.
\end{proof}

\begin{lemma}\label{piercing-distance}
Let $K$ be a planar compact convex set. Suppose that $K + t_i$ ($i
= 1,2$) are two translates of $K$ and
$$(K + t_1) \cap (K + t_2) \neq \varnothing.$$
Let $\rho(\cdot, \cdot)$ be the Minkowski metric in the plane with
the Minkowski difference body $\frac{1}{2}D(K)=\frac{K + (-K)}{2}$
as the unit ball. Then $\rho(t_1, t_2) \leq 2$.
\end{lemma}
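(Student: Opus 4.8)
The plan is to reduce the metric inequality $\rho(t_1,t_2)\le 2$ to a single set-membership statement and then produce the required representation explicitly. Recall that the Minkowski metric associated with the symmetric convex body $B=\frac{1}{2}\bigl(K+(-K)\bigr)$ is $\rho(x,y)=\inf\{\lambda>0:\,x-y\in\lambda B\}$, so that $\rho(t_1,t_2)\le 2$ holds precisely when $t_1-t_2\in 2B$. Since $2B=K+(-K)$, the whole lemma is equivalent to the claim $t_1-t_2\in K+(-K)$, and this is what I would prove.

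For the core step I would exploit the intersection hypothesis directly. Pick a point $z\in(K+t_1)\cap(K+t_2)$. Then $z-t_1\in K$ and $z-t_2\in K$, so writing $a=z-t_1$ and $b=z-t_2$ gives $a,b\in K$ together with
$$t_2-t_1=(z-t_1)-(z-t_2)=a-b\in K+(-K).$$
Because $K+(-K)$ is centrally symmetric about the origin, this also yields $t_1-t_2\in K+(-K)=2B$, whence $\rho(t_1,t_2)\le 2$, as required.

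Since the argument is essentially a one-line computation, there is no serious obstacle; the only points that require care are formal. First, one should record that $B$ is a legitimate unit ball: as $K$ is compact and convex, $B$ is compact, convex, and symmetric, so $\rho$ is a well-defined (semi)norm, and the equivalence $\rho(t_1,t_2)\le 2\iff t_1-t_2\in 2B$ uses only that $B$ is closed. Second, if $K$ is lower-dimensional (a segment or a point) then $B$ has empty interior and $\rho$ degenerates to a seminorm; the membership $t_1-t_2\in 2B$ nonetheless holds, so the stated bound remains valid under the natural reading. Finally, nothing in the argument is special to the plane, which is exactly why, as remarked before the statement, the lemma extends verbatim to $\R^d$.
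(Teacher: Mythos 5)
Your proof is correct and follows essentially the same argument as the paper: pick a common point of the two translates, express the difference $t_1-t_2$ as a difference of two points of $K$, and conclude that it lies in $K+(-K)$, the $\rho$-ball of radius $2$. The additional remarks on degenerate $K$ and the extension to $\R^d$ are fine but not needed.
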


\begin{proof}
Let $x \in (K + t_1) \cap (K + t_2)$. Since $x \in (K + t_1)$,
there exists a point $a_1 \in K$ such that $x = a_1 + t_1$.
Similarly, $x = a_2 + t_2$ for some $a_2 \in K$. Therefore
$$ t_1 - t_2 = a_2 - a_1, $$
thus the point $t_1 - t_2$ belongs to the Minkowski sum $K + (-
K)$, which is the $\rho$-ball of radius $2$.

\end{proof}

The two lemmas above allow to restate
Conjecture~\ref{conj:dolnikov} as follows.

\begin{conjecture}\label{conj:restated}
Let $X_1$, $X_2$, $X_3$ be three non-empty finite point sets in
the plane, $K$ be a compact convex planar set and
$\rho(\cdot, \cdot)$ be the Minkowski metric in the plane with
unit ball $\frac{1}{2}D(K)$. Suppose that $\rho (x', x'') \leq 2$
for every pair of points $(x', x'')$ such that $x' \in X_i, x''
\in X_j$ and $i \neq j$. Then there exists $m \in \{1,2,3\}$ such
that $X_m$ can be covered by three translates of $K$.
\end{conjecture}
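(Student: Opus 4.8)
The restatement reduces matters to the following: given three finite point sets $X_1,X_2,X_3$ with all cross-distances at most $2$ in the Minkowski metric $\rho$ (unit ball $\frac12 D(K)$), one of them can be covered by three translates of $K$. The plan is to locate, by a colourful argument, a single family $X_m$ that is confined to a ball of \emph{small} $\rho$-radius, and then to cover that ball by three translates of $K$.

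First I would feed the cross-distance hypothesis into Lov\'asz's colourful Helly theorem. Fix the radius
$$ r \;=\; \sup\{\, R_\rho(\{p,q,s\}) : \rho(p,q),\rho(q,s),\rho(p,s)\le 2 \,\}, $$
the largest $\rho$-circumradius $R_\rho$ of a triple of pairwise $\rho$-distance-$\le 2$ points (a Minkowski--Jung quantity). Consider the three families of $\rho$-balls $\mathcal B_i=\{B_\rho(x,r):x\in X_i\}$, $i=1,2,3$. A colourful triple $B_\rho(x_1,r),B_\rho(x_2,r),B_\rho(x_3,r)$ with $x_i\in X_i$ has pairwise $\rho$-distances at most $2$ between its centres (these are cross-pairs), so by the definition of $r$ the three centres admit a common $\rho$-circumcentre $z$ with $\rho(z,x_i)\le r$; hence $z\in\bigcap_i B_\rho(x_i,r)$. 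Thus every colourful triple intersects, and colourful Helly yields an index $m$ with $\bigcap_{x\in X_m}B_\rho(x,r)\neq\varnothing$; equivalently $X_m$ lies in a single ball $B_\rho(z,r)$ of $\rho$-radius $r$.

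It then remains to prove the covering lemma: a $\rho$-ball of radius $r$ can be covered by three translates of $K$. For the Euclidean disk this is exactly the classical statement that three unit disks cover a disk of radius $2/\sqrt3$, and here the Minkowski--Jung radius is also $2/\sqrt3$, so the two constants match precisely --- this already gives Theorem~\ref{euclidean-balls} in the three-family case and explains why the bound is $3$. For a general centrally symmetric $K$ one has $\frac12 D(K)=K$, so the target is to show that a homothet $rK$ of the unit ball decomposes into three pieces each of $\rho$-circumradius at most $1$ (equivalently, each contained in a translate of $K$); I would attempt this through a Borsuk-type partition of $rK$ adapted to the norm $\rho$.

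The main obstacle is precisely this covering lemma, i.e.\ establishing that the three-fold covering radius dominates the Minkowski--Jung radius for every admissible $K$. Two complications arise beyond the Euclidean case. For a non-symmetric $K$ such as a triangle, the unit ball $\frac12 D(K)$ is a hexagon while the covering pieces must be translates of the triangle $K\neq -K$, so the Minkowski--Jung estimate and the three-fold covering must both be carried out for this mismatched pair; this is why the triangle is treated separately. And for a fully general convex $K$ I do not expect the partition argument to go through cleanly, which is consistent with the statement remaining a conjecture: the method above is what I would use to settle the centrally symmetric and triangle cases, with the norm-dependent covering lemma as the crux in each.
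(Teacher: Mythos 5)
Your proposal follows essentially the same route as the paper takes for the cases it actually settles: your colourful-Helly-plus-Jung-radius argument is exactly the paper's Lemma~\ref{co22}, and the covering step you flag as the crux for centrally symmetric $K$ (a $\rho$-ball of Jung radius is covered by three unit $\rho$-balls) is precisely Gr\"unbaum's Theorem~II, which the paper cites rather than reproves. Like the paper, you obtain the centrally symmetric case this way, correctly note that the triangle requires a separate argument (the paper uses width estimates and a hexagon-covering lemma there), and leave the general statement open --- which is accurate, since it remains a conjecture in the paper as well.
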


Conjecture~\ref{conj:restated} is equivalent to
Conjecture~\ref{conj:dolnikov}. However, the difference is that
the set denoted by $K$ in Conjecture~\ref{conj:dolnikov} plays the
role of $- K$ in Conjecture~\ref{conj:restated}. We also emphasise
that for centrally symmetric $K$ centred at the origin one has
$$K = - K = \frac{1}{2}D(K).$$

Similar statements for theorems \ref{dolnikov-symmetric}, \ref{main-triangles} and \ref{euclidean-balls} follow naturally.

\section{First proof of Theorem \ref{dolnikov-symmetric}}\label{section-centrally-symmetric1}

In this section we prove Conjecture~\ref{conj:restated} under
assumption that $K$ is centrally symmetric. As mentioned above,
$\rho(\cdot, \cdot)$ is the Minkowski metric that has $K$ as unit ball.

The Jung constant or the Jung radius, $J(\rho)$ is defined as the infimum of the set of non-negative real numbers $\mu$ which have the following property:
\textit{given any family of pairwise intersecting translates $\{x_i + K:
i\in I\}$ then $\bigcap _{i\in I}(x_i + \mu K)\neq\emptyset$}. In
other words, for every set of points $X$ such that $\rho(x, y)\leq
2$ for any $x,y\in X,$ we have that $X$ is contained in some
translate of $\mu K.$ A $\rho$-ball of radius $J(\rho)$ is
called a {\it Jung ball}. This term has been defined in \cite{Gru59} and it
is known that $J(\rho)\geq 1$ with equality if and only if $K$
is a parallelogram.  We denote by $B_{J(\rho)} (z)$ the $\rho$-ball of radius $J(\rho)$ centred and $z$.

Conjecture \ref{conj:restated} will follow from the following lemma.

\begin{lemma} \label{co22}
Let $X_1$, $X_2$, $X_3$ be three non-empty finite point sets in
the plane, $K$ be a centrally symmetric compact convex planar set and
$\rho(\cdot, \cdot)$ be the Minkowski metric in the plane with
unit ball $K$. Suppose that $\rho (x', x'') \leq 2$
for every pair of points $(x', x'')$ such that $x' \in X_i, x''
\in X_j$ and $i \neq j$. Then, there exists $m \in
\{1,2,3\}$ such that $X_m$ can be covered with one ball of radius
$J(\rho)$.
\end{lemma}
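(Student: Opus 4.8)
The plan is to derive Lemma~\ref{co22} from the colourful version of Helly's theorem of Lov\'asz \cite{Barany:1982va}, applied to the families of $\rho$-balls of radius $J(\rho)$ centred at the points of $X_1,X_2,X_3$. First I would recast the desired conclusion as a common-intersection statement. For a finite set $Y$, the assertion that $Y$ lies in a single ball $B_{J(\rho)}(z)$ is equivalent to $\rho(z,y)\le J(\rho)$ for all $y\in Y$; since $K$ is centrally symmetric the metric $\rho$ is symmetric, so this says $z\in B_{J(\rho)}(y)$ for all $y\in Y$, i.e. $z\in\bigcap_{y\in Y}B_{J(\rho)}(y)$. Hence ``$X_m$ can be covered by a ball of radius $J(\rho)$'' is the same as ``$\bigcap_{x\in X_m}B_{J(\rho)}(x)\neq\varnothing$''. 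Accordingly I set $\mathcal{G}_i:=\{B_{J(\rho)}(x): x\in X_i\}$ for $i=1,2,3$, three finite families of convex compact sets in the plane, and it suffices to produce some $m$ with $\bigcap\mathcal{G}_m\neq\varnothing$.

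The key step is verifying the colourful hypothesis. Take any transversal triple $x_1\in X_1$, $x_2\in X_2$, $x_3\in X_3$ and consider the balls $B_{J(\rho)}(x_1),B_{J(\rho)}(x_2),B_{J(\rho)}(x_3)$. Because the three centres come from distinct families, every cross pair satisfies $\rho(x_i,x_j)\le 2$, so $\{x_1,x_2,x_3\}$ is a $\rho$-set of diameter at most $2$. By the very definition of the Jung radius, such a set is contained in some Jung ball $B_{J(\rho)}(z)$, and that centre $z$ then lies in all three balls $B_{J(\rho)}(x_i)$. Thus every rainbow triple of $\mathcal{G}_1,\mathcal{G}_2,\mathcal{G}_3$ has a common point, and Lov\'asz's colourful Helly theorem in $\R^2$ immediately yields an index $m$ with $\bigcap\mathcal{G}_m\neq\varnothing$, which is exactly what we want.

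The one point demanding care — and the main obstacle I anticipate — is that $J(\rho)$ is defined as an \emph{infimum}, so I must ensure that balls of radius exactly $J(\rho)$, rather than $J(\rho)+\varepsilon$, already cover the three-point set. For a fixed finite family this is a routine compactness argument: for each $\mu>J(\rho)$ the compact sets $B_\mu(x_i)$ have a common point, and as $\mu\downarrow J(\rho)$ they form a nested family of nonempty compacta, whence $\bigcap_i B_{J(\rho)}(x_i)\neq\varnothing$; equivalently, the minimal enclosing $\rho$-radius of a diameter-$2$ set is attained and bounded by $J(\rho)$. Apart from this, I would only record that $\rho$-balls are convex and compact (as $K$ is compact and convex), so that Lov\'asz's theorem applies verbatim, completing the proof of Lemma~\ref{co22}.
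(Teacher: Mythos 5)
Your proposal is correct and follows essentially the same route as the paper: form the families of $\rho$-balls of radius $J(\rho)$ centred at the points of each $X_i$, verify the rainbow hypothesis via the Jung-ball property applied to a transversal triple, and invoke Lov\'asz's colourful Helly theorem to conclude that one family has a common point, i.e.\ one $X_m$ lies in a single ball of radius $J(\rho)$. The only difference is your explicit compactness argument showing the infimum in the definition of $J(\rho)$ is attained, a detail the paper passes over silently; this is a welcome (if routine) addition rather than a change of method.
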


\begin{proof}
Consider a rainbow triangle $\triangle x_1x_2x_3,$ with $x_1\in X_1,$ $x_2\in X_2,$ $x_3\in X_3.$ Since the $\rho$-diameter of $\triangle x_1x_2x_3$ is at most two, we have that there exists a translate of the Jung ball, $B_{J(\rho)}(z),$ which contains $\triangle x_1x_2x_3$. It is clear that $z\in B_{J(\rho)}(x_1)\cap B_{J(\rho)}(x_2)\cap B_{J(\rho)}(x_3).$ Now, we apply Helly's colourful theorem to the three families of balls 
\[
\{B_{J(\rho)}(x_1) : x_1\in X_1\}, \quad \{B_{J(\rho)}(x_2) : x_2\in X_2\}, \quad \{B_{(\rho)}(x_3) : x_3\in X_3\}.
\]
 Hence at least one of the families of balls has a non-empty intersection. Without loss of generality, assume that $$x\in \bigcap\limits_{x_1\in X_1} B_{J(\rho)}(x_1).$$ 
Which is equivalent to $X_1\subset B_{J(\rho)}(x)$.
\end{proof}

However, it was shown by Gr\"unbaum that we can always cover $B_{J(\rho)}(x)$ by three unit $\rho$-balls \cite[Theorem II]{Gru57}.  The proof consists of finding three points $x_1,x_1,x_3$ in the boundary of $B_{J(p)}$ such that the $\rho$-distance between the extremes of two pairs $x_i, x_j$ is equal to $2$ and the the distance between the last pair is less than $2$.  Taking the three $\rho$-balls with diameters of the form $[x_i,x_j]$ completes the proof.  This confirms Conjecture~\ref{conj:restated} (and then
Conjecture~\ref{conj:dolnikov}) for centrally symmetric sets, i.e.
proves Theorem~\ref{dolnikov-symmetric}.

\section{Second proof of Theorem \ref{dolnikov-symmetric}}
\label{section-second-proof}
In this section we present a constructive proof of Theorem \ref{dolnikov-symmetric}.  As before, let $K$ be a centrally symmetric convex set and $\mathcal{F}_1, \mathcal{F}_2$ and $\mathcal{F}_3$ be families of translates of $K$.

Without loss of generality, we may suppose $K$ is smooth and that $0 \in
K$.  Given $x+K$ a translate of $K$, we call $x$ the
\textit{centre} of $x+K$.  For any point $y$ in the boundary of
$K+x$, we call $y-x$ a \textit{radius} of $K$.  If $y$ and $z$ are
points in the boundary of $x+K$ such that $x$ is in the segment with
endpoints $z,y$, we call $z-y$ a \textit{$K$-diameter} of $x+K$.  Notice that $K$-diameters are precisely those segments with length $2$ according to the Minkowski metric with unit ball $K$.
\\

We may assume without loss of generality that every finite intersection of members of $\mathcal{F}_1 \cup \mathcal{F}_2 \cup \mathcal{F}_3$ has a unique point with minimal $y$-coordinate.  If a convex set has a unique point with minimal $y$ coordinate, we will call it the $y$-directional minimum.  In this section, we will say a point $x_1$ is above (resp. below) $x_2$ if its $y$-coordinate is larger (resp. smaller).

Let $K_1$, $K_2$ be the two
copies of $K$ in different families such that the $y$-directional
minimum $p$ of $K_1 \cap K_2$ is maximal.

Suppose that $K_1 \in \F_1$ and $K_2 \in \F_2$.  We show that
$\pi(F_3) \le 3$.  Let $x_1, x_2$ be the centres of $K_1, K_2$
respectively.  We know that every translate $K_3$ of $K$ that
intersects $K_1 \cap K_2$ must contain $p$.  This is a standard
argument that can be traced back to Helly's own proof of his
theorem \cite{Helly:1923mengen}.  The interested reader may find
further references in \cite{danzer1963helly}.

Let $S_1, S_2$ be
two copies of $2K$ around $x_1, x_2$
respectively.  For $K_3\in \mathcal{F}_3$ to intersect both $K_1$ and $K_2$, its centre
$x_3$ must be in $S_1 \cap S_2$.  Consider $\mathcal{F}^*_3$ the set of all translates in $\F_3$ that do not contain $p$.  We want to show
that $\pi (\F^* _3 ) \le 2$.

Let $K^*$ be the translate of $K$ with centre $p$.  Note that $x_1, x_2
\in \partial K^*$, the boundary of $K$.   Let $y_1, y_2$ be points on $\partial K^*$
such that $x_1 - y_1$ and $x_2 - y_2$ are $K$-diameters. The centres
of the translates of $K$ in $\F^*_3$ must be in $(S_1 \cap
S_2)\backslash (-K^*)$.  Also, since the $y$-directional minimum
of their intersection with $K_1$ and $K_2$ must be lower than $p$,
they must be in the lower component of this set.  Let $A$ be the
resulting set of positions where the centres of $\F^*_3$ can be.
It remains to show that $A$ can be covered by two copies of $K$.
\\

There are two natural candidates for these translates.  Consider $q$
the lower point of intersection of the boundaries of $S_1$ and
$S_2$.  
Note that since $q-x_1$ and $q-x_2$ are radii of $2K$, they are also diameters of $K$.  Let $R_1$ and $R_2$ be the two translates of $K$
that have $q-x_1$ and $q-x_2$ as $K$-diameters, respectively.

\begin{claim}
$A \subset R_1 \cup R_2$
\end{claim}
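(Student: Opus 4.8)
The plan is to reduce the claim to a single, more symmetric inclusion by exploiting a homothety centred at $q$. Write $m_i=\tfrac{1}{2}(x_i+q)$ for the centre of $R_i$; since $q-x_i$ is a $K$-diameter of $S_i$ we have $\rho(q,x_i)=2$ and $\rho(m_i,x_i)=\rho(m_i,q)=1$, so $R_i=m_i+K$ really is the $\rho$-ball of radius $1$ with diameter $[x_i,q]$. For the central homothety $h\colon z\mapsto 2z-q$ (centre $q$, ratio $2$), homogeneity of the norm inducing $\rho$ gives
\[
\rho\big(h(z),x_i\big)=\rho(2z-q,\,x_i)=2\,\rho(z,m_i),
\]
so that $z\in R_i$ if and only if $h(z)\in S_i$. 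Consequently $R_1\cup R_2=h^{-1}(S_1\cup S_2)$, and the claim $A\subseteq R_1\cup R_2$ is equivalent to $h(A)\subseteq S_1\cup S_2$, which is the statement I would actually prove.

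Before doing so I would record the elementary containments that pin down $A$. By the triangle inequality for $\rho$ one gets $R_i\subseteq S_i$, with $\partial R_i$ meeting $\partial S_i$ only at $q$ (here the smoothness/strict convexity of $K$ is used), and likewise $K^*\subseteq S_1\cap S_2$, with $\partial K^*$ tangent to $\partial S_i$ exactly at $y_i=2p-x_i$. Hence the boundary of the lower region $A$ splits into three arcs: $\alpha_1\subseteq\partial S_1$ and $\alpha_2\subseteq\partial S_2$ joining $q$ to $y_1$ and to $y_2$, and the lower arc $\beta\subseteq\partial K^*$ joining $y_1$ to $y_2$.

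To prove $h(A)\subseteq S_1\cup S_2$ I would first show that $h$ carries each boundary arc into $S_1\cup S_2$, equivalently that $\alpha_1\subseteq R_2$, $\alpha_2\subseteq R_1$ and $\beta\subseteq R_1\cup R_2$. For $\alpha_1$: the set $\partial S_1\cap R_2$ is a single sub-arc of $\partial S_1$ (intersection of the convex body $R_2$ with the convex curve $\partial S_1$); it has $q$ as an endpoint because $\rho(q,m_2)=1$, and it contains $y_1$ in its interior because $\rho(y_1,m_2)<1$, so the arc $\alpha_1$ running from $q$ to $y_1$ lies in this sub-arc. The arc $\beta$ is treated similarly, now using $\rho(z,p)=1$ for $z\in\partial K^*$ together with $\rho(q,x_i)=2$ and $\rho(x_i,p)=1$ to see that $\beta$ passes from $R_2$ near $y_1$ to $R_1$ near $y_2$ without leaving their union. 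Once $h(\partial A)=\partial(h(A))\subseteq S_1\cup S_2$, the full inclusion follows by a connectedness argument: $S_1\cup S_2$ is simply connected, so its complement is connected and unbounded, whereas any point of the bounded set $h(A)$ lying outside $S_1\cup S_2$ could be joined to infinity only by crossing $\partial(h(A))\subseteq S_1\cup S_2$, a contradiction.

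The main obstacle is the arc analysis, and specifically the need to use that $q$ and $p$ are the \emph{lower} crossing points. The inclusions $\alpha_1\subseteq R_2$ and $\beta\subseteq R_1\cup R_2$ fail for the analogous upper configuration, so the proof must invoke the defining property of $A$ as the lower component of $(S_1\cap S_2)\setminus K^*$ to fix the correct sides of the various arcs; the verifications then reduce to Minkowski-norm inequalities among $q,p,x_1,x_2,y_1,y_2$, which I expect to be the delicate part. Degenerate configurations (coincident tangency points, non-transverse arcs) would be disposed of by the smoothness assumption together with a continuity argument.
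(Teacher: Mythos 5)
Your opening reduction is correct: since $h(z)-x_i=2(z-m_i)$, indeed $R_i=h^{-1}(S_i)$, and the connectedness argument validly reduces the claim to showing $\partial A\subseteq R_1\cup R_2$, i.e.\ to the three arc inclusions $\alpha_1\subseteq R_2$, $\alpha_2\subseteq R_1$, $\beta\subseteq R_1\cup R_2$ (in fact the homothety is decorative here: the same topological argument applies directly to $A$ and $R_1\cup R_2$, whose union is simply connected because $R_1\cap R_2\ni q$). This organization --- cover the boundary arcs, then invoke topology --- is genuinely different from the paper's decomposition of $A$ by the lines $qx_1$ and $qx_2$ into three regions. The problem is that the three arc inclusions are the entire substance of the claim, and none of them is actually proved.

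Concretely, there are three gaps. First, your justification that $\partial S_1\cap R_2$ is a single sub-arc --- ``intersection of the convex body $R_2$ with the convex curve $\partial S_1$'' --- invokes a false general principle: a convex body can meet a convex curve in several components (a thin rectangle through the centre of a disk meets the circle in two arcs). What you need is that boundaries of two \emph{positive homothets} of a strictly convex body cross at most twice; this is true but is itself a nontrivial fact requiring proof or citation. Second, the inequality $\rho(y_1,m_2)<1$ (i.e.\ $y_1\in R_2$) is asserted as if elementary, but it does not follow from the triangle inequality applied to the distances you list --- those only give bounds larger than $2$ --- and it is sharp: when $\rho(x_1,x_2)=2$ one has $\rho(y_1,m_2)=1$ exactly, so any proof must exploit that $p$ is the \emph{lowest} point of $K_1\cap K_2$ and $q$ the \emph{lower} crossing of $\partial S_1,\partial S_2$. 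This is precisely the statement the paper labours over in Part 1 (``Thus $y_2\in R_1$''), via translating an arc of $\partial A$ by the vector $r-p$ and comparing it with $\partial R_1$. Third, the inclusion $\beta\subseteq R_1\cup R_2$ --- that the arcs $\partial K^*\cap R_1$ and $\partial K^*\cap R_2$ overlap, leaving no uncovered gap at the bottom of $K^*$ --- is dispatched in one sentence; it is the counterpart of the paper's Part 2 (the midpoints $m_1$, $m_2$, $z$ and the two trapezoids covering $\triangle x_1x_2q$) and again needs a real argument. You yourself flag these verifications as ``the delicate part'' you expect to be reducible to norm inequalities; but since they cannot follow from the distance relations alone (they fail for the upper configuration, as you note), what remains is exactly the work the paper does, so the proposal is an outline rather than a proof.
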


\begin{figure}
\centerline{
\includegraphics[scale=0.7]{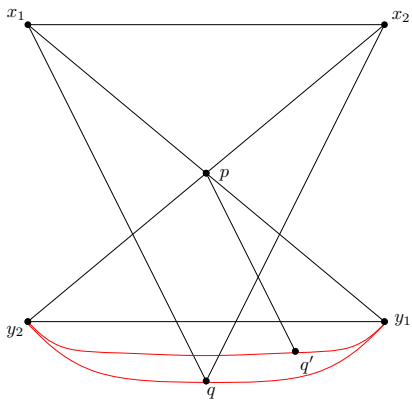}
}
\caption{Arcs showing region $A$}\label{figura1}
\end{figure}

For simplicity we assume that $\triangle x_1 q x_2$ is
oriented as in figure \ref{figura1}.  Let $A_1$ be the section of $A$ left of
the line $q x_1$, $A_2$ the section of $A$ inside triangle $\triangle
x_1 q x_2$ and $A_3$ the section of $A$ right of the line $q x_2$. It
remains to show that $A_1 \subset R_1$, $A_2 \subset R_1 \cup R_2$
and $A_3 \subset R_2$.  The arguments for $A_1$ and $A_3$ are
analogous.
\\

\noindent \textbf{Part 1.} $A_1 \subset R_1$
\\

Let $r$ be the point in the segment $x_1 q$ that is the centre of
$R_1$ (see Figure \ref{figura2} below).  Let $q' = p + (q-r)$, note that $q'$ is in the boundary of
$A$.  The vectors $r-p$ and $q-q'$ are equal.  Let $y_2^* = y_2 +
(r-p) = r+ (y_2 - p)$ and $x_1^* = x_1 + (r-p)$.  If we translate the arc of
boundary of $A$ between $q'$ and $y_2$ by the vector $r-p$ we
obtain an arc in the boundary of $R_1$.  Also $y_2^*$ and $x_1^*$
are in the boundary of $R_1$.
\\

Since $x_1^*$ is in the boundary of $R_1$, it cannot be contained
in $\triangle x_1 y_2^* r$, so $y_2$ cannot be to the left of line
$x_1 y_2^*$ (oriented from $y_2^*$ to $x_1$).  This is because
$x_1-y_2$ is equal to $x_1^* - y_2^*$.  Since the arc of boundary
of $R_1$ between $q$ and $y_2^*$ is a translated copy of the arc
of boundary of $A$ (and $-K^*$) between $y_2$ and $q'$, $y_2$
cannot be to the left of it (oriented from $q$ to $y_2^*$).  Thus
$y_2 \in R_1$.  Also note that $y_2$ and $x_1$ are contained in
$S_2$, so the slope of the boundary of $A$ corresponding to the
arc determined by $S_2$ is bounded by the direction $x_1 y_2$.
This implies that $A_1$ is on the right side of the line $x_1
y_2^*$ and the arc of $R_1$ between $y_2^*$ and $q$, which implies
$A_1 \subset R_1$.
\\
\begin{figure}[htc]
\centerline{
\includegraphics[scale=0.7]{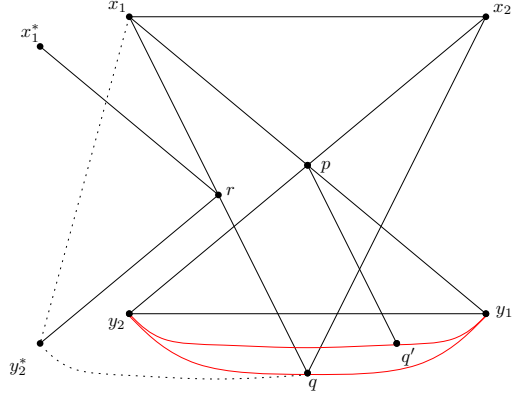}
}
\caption{Arguments for part $1$.}\label{figura2}

\end{figure}

\noindent \textbf{Part 2.} $A_2 \subset R_1 \cup R_2$.
\\
Consider $m_1$ the midpoint of $x_1 q$, $m_2$ the midpoint of $x_2
q$ and $z$ the midpoint of $x_1 x_2$.  The length $|x_1 x_2|$ is
bounded above by the length of the diameter parallel to it in $K$.
Thus $|m_1 m_2| = \frac{|x_1 x_2|}{2}$ and is less than the radius
of $K$ in that direction.  Also $|m_1 z| = |m_2 x_2|$ and is the
length of the radius of $K$ in that direction.  Thus $z, m_2 \in
R_1$ and analogously $z, m_1 \in R_2$.  This shows $A_2
\subset \triangle x_1x_2 q \subset R_1 \cup R_2$.
\qed

\begin{figure}[htc]
\centerline{
\includegraphics[scale=0.7]{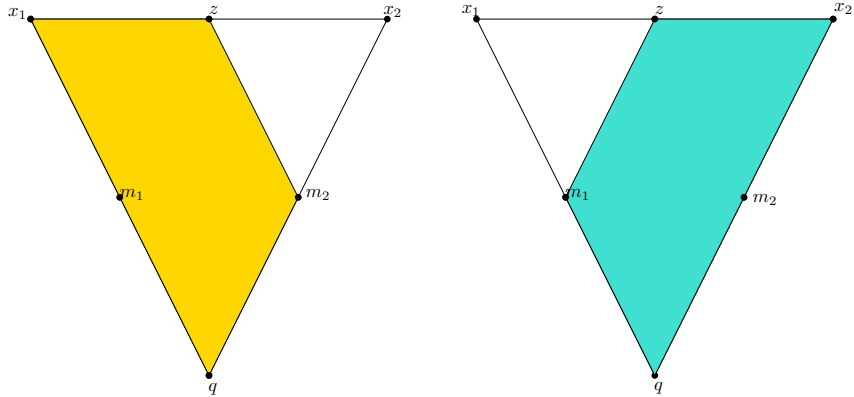}
}
\caption{Arguments for part $2$.  The left and right trapezoids are parts of $\triangle x_1 x_2 q$ covered by $R_1$ and $R_2$, respectively}\label{figure-trapezoids}

\end{figure}

\section{Results for Euclidean disks}\label{section:euclidean-disks}

In this section we prove the following stronger version of Theorem
\ref{euclidean-balls}.  We denote by $[k]$ the set $\{1,2, \ldots, k\}$.

\begin{reptheorem}{euclidean-balls}[Second version]
Let $P_1,\,P_2,\, \ldots ,P_k$ be compact sets of points in the
Euclidean plane, with $k\geq 2$, such that the distance between
any two points in different sets is at most 1. Then, for some
$i\in[k]$, $\bigcup_{j\neq i}P_j$ can be covered by the union of
three balls of diameter less than $1$.
\end{reptheorem}

We use the following notation: for every $x\in
\mathbb{R}^2$ we denote by $B_x$ the unit ball centred at $x$, and $o$ the origin.  Consider
$A(r)=B_{a}\cap B_{b},$ with the points $a,b$ on the $x$-axis
and such that $d(a,o)=d(o,b)=r/2.$

\begin{proof} Given a horizontal line $xy$ we denote by $\Gamma
^{xy}$ the upper closed half-space bounded by $xy,$ and by
$\Gamma_{xy}$ the lower one.  Without loss of generality, we may assume that
$\text{diam}\, P_1 \geq\text{diam}\, P_j,$ for every $j\in\{2,3,\ldots ,k\}.$ Let
$x_1,y_1\in P_1$ such that $r=d(x_1,y_1)=\text{diam}\, P_1$ and
suppose that $x_1,y_1$ are on the $x$-axis and the segment
$[x_1,y_1]$ is centred at the origin $o$. The proof is divided into
two main cases:

\textbf{(1)} $d> 1$. Here we have two subcases:

\begin{enumerate} \item [(1a)] $\frac{2}{\sqrt{3}}\leq d\leq 2$. The case
$d=2$ follows immediately from the conditions of the problem, so
we may assume that $d< 2$.  First, we have that $\bigcup
_{i=2}^kP_i\subset A(r)$.  In this case we show that $A(r)$
can be covered by $3$ balls of diameters strictly smaller than
$1$.  Since for every pair of positive numbers $r_2,r_1$, with
$r_2>r_1$ we have that $A(r_2)\subset A(r_1),$ it is sufficient to
prove the case when $r=\frac{2}{\sqrt{3}}.$ In this case we
proceed as follows: let $a,b$ be the points of intersection
between the boundaries of $B_{x_1}$ and $B_{y_1},$ also let $c$
and $e$ be the points on the boundary of $B(y_1)$ and $B(x_1),$
respectively, such that $\measuredangle cba= \measuredangle
abe=25^{\circ}.$ After some simple calculations we have that
$d(b,e)<1$ and the radius of the circumscribed circle of triangle
$\triangle ace$ is smaller than $1/2$. Then, the circle
circumscribed to $\triangle ace$ and the circles with radii
$\frac{1}{2} d(b,e)$ and centres at the midpoints of $[c,b]$ and
$[b,e],$ respectively, cover $B_{x_1}\cap B_{y_1}.$ Hence,
$A(r)=B_{x_1}\cap B_{y_1}$ can be covered by the union of three
balls of diameter less than $1$ (see Figure \ref{figure:disks1}).
\begin{figure}[h!]
\centerline{ \psset{unit=.8cm}
\begin{pspicture}(0,-.6)(4.5,8.8)
\pscircle[linecolor=blue](2.48,5.17){2.02}
\pscircle[linecolor=blue](1.47,2.25){2.02}
\pscircle[linecolor=blue](3.38,2.25){2.02}
\psarc[linewidth=.04](.2,3.84){4.02}{-75}{75}
\psarc[linewidth=.04](4.8,3.84){4.1}{105}{255}
\psdots[dotsize=3pt](.2,3.84)(4.8,3.84)(4.22,4.11)(.73,4.12)(2.43,7.2)(2.43,.5)(2.48,5.17)(1.47,2.25)(3.38,2.25)
\psline(.2,3.84)(4.8,3.84) \rput{0}(-.2,3.85){\small $x_1$}
\rput{0}(5.2,3.85){\small $y_1$}\rput(2.4,7.5){\small
$a$}\rput(2.4,.2){\small $b$}\rput(.5,4.2){\small
$c$}\rput(4.45,4.12){\small $e$}
\end{pspicture}}
\caption{Disks for case (1a)}
\label{figure:disks1}
\end{figure}
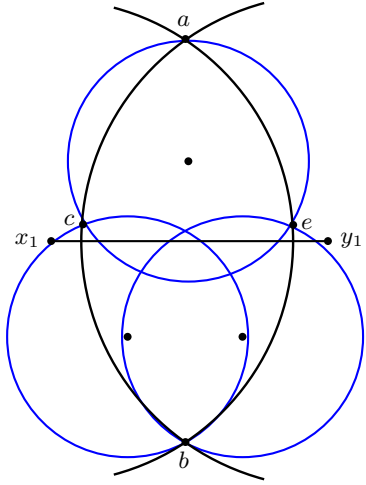

\item[(1b)] $1< d<\frac{2}{\sqrt{3}}$. In this case we show
that $\bigcup _{i=2}^kP_i$ is contained in a special subset of
$A(r)$, which in turn can be covered by $3$ balls of diameters
strictly smaller than $1$. Since for every $r>1$ we have that
$A(r)\subset A(1),$ we know that $\bigcup _{i=2}^kP_i \subset
A(1)$. Set $x_1=(-1/2,0)$, $y_1=(1/2,0)$, and $p=(0,1/\sqrt 3).$
Let $q$ and $s$ be the points where the circle with radius 1 and
centre at $-p$ intersects the boundaries of $B_{x_1}$ and
$B_{y_1}$, (as shown in Figure \ref{figure:disks2}) and let $z,w$ and $x,y$ be the
points where the lines parallel to the $x$-axis, through $p$ and
$-p$, respectively, intersects the boundary of $B_{x_1}\cap
B_{y_1}.$ Now, let $m$ be the midpoint of the segment $[q,a].$ Let
$o_1$ be the centre of the circle circumscribed to triangle
$\triangle aqs. $ We know that $o_1$ is the intersection point
between the segments $[a,o]$ and $[m,y_1].$ Let $\alpha$ be the
value of the angle $\measuredangle qao,$ then standard computations show that $\alpha< 39.3^{\circ}$ and $d(q,a)<0.72$, it follows that
$$d(a,o_1)<\frac{0.36}{\cos 40^{\circ}}<0.47,$$ that is, the radius
of the circle circumscribed to $\triangle aqs$ is smaller than
$1/2$.
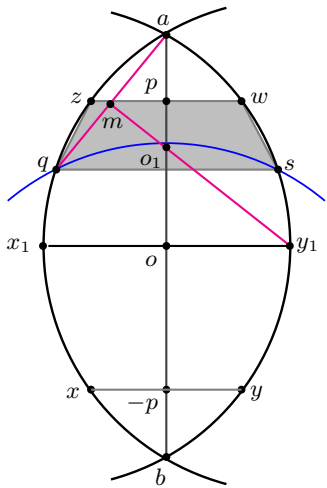
\begin{figure}[h!]
\centerline{ \psset{unit=.8cm}
\begin{pspicture}(0,-.5)(5,8.4)
\psline[linecolor=gray,fillcolor=lightgray,fillstyle=solid](1.2,6.25)(3.7,6.25)(4.3,5.11)(.62,5.11)(1.2,6.25)
\psarc[linewidth=.04](.49,3.84){4.02}{-75}{75}
\psarc[linewidth=.04](4.51,3.84){4.1}{105}{255}
\psarc[linewidth=.03,linecolor=blue](2.45,1.45){4.1}{50}{130}
\psline[linecolor=darkgray](2.45,7.35)(2.45,.33)
\psline[linecolor=magenta](2.45,7.35)(.62,5.11)
\psline[linecolor=magenta](4.51,3.84)(1.52,6.2)
\psdots[dotsize=3pt](4.5,3.84)(.4,3.84)(2.45,3.84)(1.2,6.25)(3.7,6.25)(1.2,1.45)(3.7,1.45)(2.45,1.45)
(2.45,6.25)(2.45,7.35)(2.45,.33)(.62,5.11)(4.31,5.11)(1.52,6.2)(2.45,5.48)
\psline(.49,3.84)(4.51,3.84)
\psline[linecolor=gray](1.2,1.45)(3.7,1.45)
\rput{0}(.95,6.3){\small $z$} \rput{0}(4,6.3){\small $w$}
\rput{0}(0,3.85){\small $x_1$} \rput{0}(4.8,3.85){\small $y_1$}
\rput{0}(.9,1.4){\small $x$} \rput{0}(3.95,1.4){\small
$y$}\rput(2.05,1.2){\small $-p$} \rput(2.35,0){\small
$b$}\rput(2.4,7.6){\small $a$}\rput(2.2,6.5){\small
$p$}\rput(.4,5.2){\small $q$}\rput(4.5,5.2){\small $s$}
\rput(2.2,3.6){\small $o$}\rput(1.56,5.9){\small $m$}
\rput(2.2,5.25){\small $o_1$}
\end{pspicture}}
\caption{Regions for case (1b)}
\label{figure:disks2}
\end{figure}

Now, consider the circle with radius $0.49$ and centre at $o_1$ and
let $q'$ and $s'$ be the points where this circle intersects the
boundary of $B_{x_1}\cap B_{y_1}.$ We have that
$d(-p,q')=d(-p,s')<1$ and the union of the circles with diameters
$[-p,q'],$ $[-p,s'],$ and the circle with radius $0.49$ and centre
at $o_1$ cover the region $B_{x_1}\cap B_{y_1}\cap \Gamma^{xy}.$
Finally, if there is a point of $\bigcup _{i=2}^kP_i$ in the
region $B_{x_1}\cap B_{y_1}\cap \Gamma^{zw}$, then $\bigcup
_{i=2}^kP_i$ must be contained in the region $B_{x_1}\cap
B_{y_1}\cap \Gamma^{xy},$ else, if there is a point in the region
$B_{x_1}\cap B_{y_1}\cap \Gamma_{xy}$ then $\bigcup _{i=2}^kP_i$
must be contained in the region $B_{x_1}\cap B_{y_1}\cap
\Gamma_{zw}.$ In both cases we have proved in the argument given
above that we can cover $\bigcup _{i=2}^kP_i$ with the union of
three balls with diameter strictly less than 1. This concludes the
proof of (1b).
\end{enumerate}

\textbf{(2)} $d\leq 1.$ In this case we have that $\text{diam}\,
\bigcup\limits _{i=1}^kP_i \leq 1$.  It is known that any closed
compact planar set in the plane with diameter $1$ can be covered
with the union of three balls of diameter less than $1$
\cite{Gru57}, which gives us the result we wanted.
\end{proof}

\textbf{Remark 1.} If $k\geq 3$ then, by Lemma ~\ref{co22}, we
have that at least one of the sets $P_i$ is contained in a ball of
diameter $\frac{2}{\sqrt{3}},$ and hence it has diameter at most
$\frac{2}{\sqrt{3}}.$ However, when $k=2$, this is not true.
Indeed, we only know that one of the sets has diameter at most
$\sqrt{2}$: Consider the vertices of a square of side $1,$ $P_1$
as one pair of diagonal points, and $P_2$ as the other pair of
diagonal points. The distance between any point from $P_1$ and any
point from $P_2$ is exactly $1,$ however, we have that
$\text{diam}\,P_1=\text{diam}\,P_2=\sqrt{2}.$

\section{Proof of Theorem \ref{main-triangles}}\label{section:triangles}

We will prove Dol'nikov's conjecture for triangles
(Theorem~\ref{main-triangles}) in the form of
Conjecture~\ref{conj:restated}. Since the statement does not
change under affine transformations of the plane, we restrict
ourselves to the case when the triangle $T$ is regular with unit
side length.

For the proof, we need the following two lemmas.

\begin{lemma}\label{lemma-triangles-1}
Let $T$ be a regular triangle in $\R^2$ with unit side length.
Assume that a finite set $X\subset R^2$ has width $h_1, h_2, h_3$
in the directions of the sides of $T$, and $h_1, h_2, h_3$ satisfy
the conditions below.
\begin{enumerate}
    \item[\rm 1.] $h_1\leq h_2\leq h_3$;
    \item[\rm 2.] $h_2\leq \sqrt{3}/2$;
    \item[\rm 3.] $h_1+h_2+h_3\leq 3\sqrt{3}/2$.
\end{enumerate}
Then $X$ can be covered by 3 translates of $T$.
\end{lemma}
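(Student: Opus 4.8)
The plan is to introduce linear coordinates adapted to the triangle and turn the covering problem into an arithmetic statement about three functionals. Let $n_1,n_2,n_3$ be the outer unit normals to the sides of $T$; since $T$ is regular one has $n_1+n_2+n_3=0$, and for $\ell_i(x)=\langle x,n_i\rangle$ we get $\ell_1+\ell_2+\ell_3\equiv 0$. Writing $T=\{x:\ell_i(x)\le c_i\}$, the common sum $c_1+c_2+c_3=\sqrt3/2$ (three times the inradius, i.e. the height of $T$) is translation invariant, so every translate of $T$ is exactly a set $\{x:\ell_i(x)\le C_i\}$ with $C_1+C_2+C_3=\sqrt3/2$ and the $C_i$ otherwise free. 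The first thing I would record is the resulting criterion: a region $R$ lies in a single translate of $T$ if and only if $\sum_i\max_{x\in R}\ell_i(x)\le\sqrt3/2$. This converts "covering by translates of $T$'' into inequalities among the three functionals.

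Next I would reduce $X$ to the polygon it determines. Since the width of $X$ in the direction of side $i$ is $\max_X\ell_i-\min_X\ell_i=h_i$, the set $X$ is contained in $H=\{x:m_i\le\ell_i(x)\le M_i\}$, where $M_i=\max_X\ell_i$, $m_i=\min_X\ell_i$, $M_i-m_i=h_i$, and covering $H$ covers $X$. Because $\ell_3=-(\ell_1+\ell_2)$, the triangle inequality $h_3\le h_1+h_2$ holds automatically, so $H$ is a convex hexagon, namely the intersection of an ``upward'' triangle $U=\{\ell_i\le M_i\}$ and a ``downward'' triangle $D=\{\ell_i\ge m_i\}$. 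It therefore suffices to cover this hexagon by three translates.

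The heart of the argument is an explicit choice of three translates. For each $k$ I would take $T^{(k)}$ tight on the far edge $\ell_k=M_k$ and spread the remaining budget over the other two directions: $T^{(k)}=\{\ell_k\le M_k\}\cap\{\ell_i\le\beta_i^{(k)}\ (i\ne k)\}$ with $\sum_{i\ne k}\beta_i^{(k)}=\sqrt3/2-M_k$. Since every $x\in H$ already satisfies $\ell_k\le M_k$, a point escapes $T^{(k)}$ only when one of its two remaining coordinates exceeds the corresponding threshold; hence $x$ is left uncovered only if, for each $k$, the larger of the two non-$k$ coordinates beats its threshold. Combining this with $\ell_1+\ell_2+\ell_3=0$ and the lower bounds $\ell_i\ge m_i$ forces the two largest coordinates of such an $x$ to be simultaneously large while the smallest stays $\ge\min_i m_i$, and I would show this is impossible. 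This is exactly where the hypotheses enter: $h_1\le h_2\le\sqrt3/2$ bounds the thresholds in the two thin directions, while $h_1+h_2+h_3\le 3\sqrt3/2$ together with $h_3\le h_1+h_2$ controls the leftover slack, the governing inequality being tight precisely at the regular hexagon $h_1=h_2=h_3=\sqrt3/2$.

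The step I expect to be the main obstacle is verifying that the thresholds can indeed be chosen to eliminate every uncovered configuration, because the construction is sensitive to the position of $H$: only $\sum M_i$ and $\sum m_i$ are translation invariant, whereas the individual $M_i$ are not, and the right split depends on which direction carries the largest width $h_3$. Concretely I would first translate $X$ so as to balance the lower constraints $m_i$ against the widths, and then split into cases according to whether the smallest coordinate of a would-be uncovered point lies in a thin direction or in the wide direction $3$; the latter is the delicate case and is exactly where $h_2\le\sqrt3/2$ is consumed. Because all three hypotheses are individually sharp, the inequalities leave no margin, so this case analysis — along with the separate treatment of the degenerate shapes of $H$ (quadrilateral, triangle, or segment when some $h_i$ is extremal) — is where the real work lies.
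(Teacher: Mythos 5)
Your reduction is sound and, in fact, coincides with the paper's: writing translates of $T$ as $\{x:\ell_i(x)\le C_i\}$ with $\ell_1+\ell_2+\ell_3\equiv 0$ and $C_1+C_2+C_3=\sqrt3/2$ is correct, and your hexagon $H=\{m_i\le \ell_i\le M_i\}$ is exactly the paper's $T_1\cap T_2$, the intersection of the minimal positive and negative homotheties of $T$ containing $X$. The single-translate criterion $\sum_i\max_R\ell_i\le\sqrt3/2$ is a clean and correct observation.

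The gap is that everything after this reduction — which is the actual content of the lemma — is asserted rather than proved. You never exhibit the thresholds $\beta_i^{(k)}$, and the contradiction you sketch (``an uncovered point would need its two largest coordinates simultaneously large, which is impossible'') is simply not true for a fixed admissible choice of thresholds: for three triangles each tight on its edge $\ell_k=M_k$, the conditions ``for every $k$, some coordinate $i\ne k$ exceeds $\beta_i^{(k)}$'' can cut out a nonempty central region of $H$ even when hypotheses 1--3 hold. This is precisely the ``hole'' phenomenon that the paper has to confront: after making the same construction (triangles $Q_1,Q_2,Q_3$ anchored on the sides $AB$, $CD$, $EF$, with side lengths controlled via the perimeter bound $\frac{2}{\sqrt3}(h_1+h_2+h_3)\le 3$ and the bound $\frac{2h_2}{\sqrt3}\le 1$), the authors observe that a hole may remain, classify holes into two types (clockwise and counterclockwise), show that at each extreme position of a one-parameter sliding motion of the anchor points $K,L,M$ only one type can occur, and conclude by an intermediate-value argument that some intermediate position produces no hole. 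Your proposal contains no substitute for this mechanism; ``translate $X$ to balance the lower constraints and split into cases'' is a plan, not an argument, and you yourself flag it as the unresolved main obstacle. So the proof is incomplete exactly at the step where the hypotheses must be converted into a covering, and the static contradiction you envision would fail without some adaptive (e.g.\ continuity-based) choice of the three triangles.
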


\begin{lemma}\label{lemma-triangles-2}
Assume there are 2 finite sets $X_1, X_2 \subset \R^2$ and a line
$\ell$ in $\R^2$. Let $w_1$ and $w_2$ be the widths of $X_1$ and
$X_2$ respectively in the direction of $\ell$, and $w_1 + w_2 >
2w$. Then, there exist points $x_1\in X_1$, $x_2\in X_2$ such that
the width of the set $\{x_1, x_2\}$ in the direction of $\ell$ is
greater than $w$.
\end{lemma}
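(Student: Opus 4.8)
The plan is to reduce the statement to an elementary one-dimensional averaging argument. Fix a unit vector $u$ encoding the direction of $\ell$, so that the width in the direction of $\ell$ of any finite set $X$ equals $\max_{x\in X}\langle u,x\rangle - \min_{x\in X}\langle u,x\rangle$; the width of a two-point set $\{x_1,x_2\}$ is then simply $|\langle u,x_1\rangle - \langle u,x_2\rangle|$. Since $X_1$ and $X_2$ are finite, all the relevant maxima and minima are attained, so I would introduce $a_i = \min_{x\in X_i}\langle u,x\rangle$ and $b_i = \max_{x\in X_i}\langle u,x\rangle$ for $i=1,2$, giving $w_1 = b_1 - a_1$ and $w_2 = b_2 - a_2$.

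The key observation is that the largest width achievable by a pair $(x_1,x_2)$ with $x_1\in X_1$ and $x_2\in X_2$ is exactly $\max(b_1 - a_2,\ b_2 - a_1)$, realised by pairing an extreme projection of one set with the opposite extreme projection of the other. To finish, I would apply the trivial bound that a maximum is at least the average of its two arguments:
\[
\max(b_1 - a_2,\ b_2 - a_1) \ge \tfrac{1}{2}\bigl[(b_1 - a_2) + (b_2 - a_1)\bigr] = \tfrac{1}{2}\bigl[(b_1 - a_1) + (b_2 - a_2)\bigr] = \tfrac{w_1 + w_2}{2} > w,
\]
where the final strict inequality is precisely the hypothesis $w_1 + w_2 > 2w$. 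Thus one of $b_1 - a_2 > w$ or $b_2 - a_1 > w$ holds, and choosing $x_1,x_2$ to be the points attaining the corresponding extreme projections yields a pair whose width in the direction of $\ell$ exceeds $w$.

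There is no genuine obstacle here: the content is a pigeonhole/averaging step, and the only points requiring care are bookkeeping ones. I would make sure to record that finiteness guarantees the extrema are attained (so the chosen $x_1,x_2$ actually exist in $X_1,X_2$), and that the argument is symmetric in the two sets so the two cases $b_1 - a_2 > w$ and $b_2 - a_1 > w$ are handled identically. The same proof works verbatim for compact sets, which is worth noting since the companion lemmas are stated with an eye toward the reformulation in Conjecture~\ref{conj:restated}.
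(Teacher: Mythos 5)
Your proposal is correct and follows essentially the same route as the paper: project everything onto a line, take the extreme projections of the two sets, and use an averaging argument to show a cross-pairing of extremes has width exceeding $\tfrac{w_1+w_2}{2} > w$. The only cosmetic difference is that the paper orders the midpoints of the two projection intervals to decide which extreme pair to use, whereas you take the maximum of the two cross-differences and bound it below by their average --- the same pigeonhole step in different clothing.
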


Before proving these lemmas, let us show how they can be used to prove Theorem~\ref{main-triangles}.

\begin{proof}[Proof of Theorem~\ref{main-triangles}]
Assume that none of the sets $X_1, X_2, X_3$ can be covered by 3 translates of $T$. Enumerate the sides of $T$ by 1, 2, 3 and denote by
$h_{ij}$ the width of $X_i$ in the direction orthogonal to the $j$-th side of $T$. Since it is impossible to cover $X_i$ by
3 translates of $T$, by lemma \ref{lemma-triangles-1} for each $i = 1,2,3$ at least one of the 2 conditions holds:
\begin{enumerate}
    \item At least 2 of the 3 widths $h_{i1}, h_{i2}, h_{i3}$ are greater than $\sqrt{3}/2$.
    \item $h_{i1} + h_{i2} + h_{i3} > 3\sqrt{3}/2$.
\end{enumerate}

Since there are 2 types of conditions, 2 of the 3 sets $X_1, X_2, X_3$ satisfy the condition of the same type. Assume that these sets are
$X_1$ and $X_2$.  Consider the following 2 cases.

\medskip
\noindent {\bf Case 1.} For $X_1$ and $X_2$ the first condition holds. Then $X_1$ and $X_2$ have a common direction such that their width in this direction
is greater than $\sqrt{3}/2$. So, without loss of generality assume $h_{11} > \sqrt{3}/2$ and $h_{21} > \sqrt{3}/2$.

\medskip
\noindent {\bf Case 2.} For $X_1$ and $X_2$ the second condition holds, i.e.
$$h_{11} + h_{12} + h_{13} > \frac{3\sqrt{3}}{2} \quad \text{and} \quad h_{21} + h_{22} + h_{23} > \frac{3\sqrt{3}}{2}.$$
Then for some index $j$ we have $h_{1j} + h_{2j} > \sqrt{3}$. Without loss of generality assume $h_{11} + h_{21} > \sqrt{3}$.

From both cases we concluded that $h_{11} + h_{21} > \sqrt{3}$. According to Lemma \ref{lemma-triangles-2}, there exist $x_1 \in X_1$ and $x_2 \in X_2$ such that the width of
$\{x_1, x_2\}$ in the direction of the first side of $T$ is greater than $\sqrt{3}/2$. Then $\rho(x_1, x_2) > 2$, which contradicts the assumption of
Conjecture~\ref{conj:restated}.

\end{proof}

The core of the proof is contained in Lemma \ref{lemma-triangles-1}, which we prove now.

\begin{proof}[Proof of Lemma \ref{lemma-triangles-1}]
Let $T_1$ be the minimal positive homothety of $T$ that contains $X$. Similarly, let $T_2$ be the minimal (by absolute value) negative
homothety of $T$ that contains $X$.

The intersection $T_1\cap T_2$ is a convex hexagon with all angles
equal to $2\pi/3$.  The only possible degenerate cases come from coincidences of vertices. Indeed, $T_2$ cuts from $T_1$ three regular triangles, one at each angle (possibly with a zero side length), and those triangles
are pairwise non-intersecting since each side of $T_1$ contains a point of $X$.

Denote the hexagon $T_1\cap T_2$ by $ABCDEF$, labelling its vertices in the cyclic order so that the segments $AB$, $CD$, $EF$ lie on the sides
of $T_1$ directed as sides 1, 2, 3 of $T$ respectively (see figure \ref{figure:definition-of-heights}).

\begin{figure}[h!]
\centerline{\includegraphics{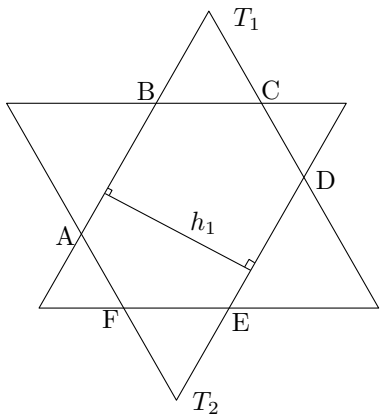}}
\caption{Construction of hexagon $ABCDEF$}\label{figure:definition-of-heights}
\end{figure}

 If $a$ is the side-length of $T_1$, then the side lengths of $ABCDEF$ are as follows.
\begin{eqnarray*}
AB = \frac{2}{\sqrt{3}}(h_2 + h_3) - a, & BC = a - \frac{2h_3}{\sqrt{3}} \\
CD = \frac{2}{\sqrt{3}}(h_1 + h_3) - a, & DE = a - \frac{2h_1}{\sqrt{3}} \\
 EF = \frac{2}{\sqrt{3}}(h_1 + h_2) - a, & FA = a - \frac{2h_2}{\sqrt{3}}.
\end{eqnarray*}

\begin{claim}\label{c1}
 $\max(AB, CD, EF) \leq 1$.
\end{claim}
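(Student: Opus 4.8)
The plan is to bound the three ``upper'' sides $AB$, $CD$, $EF$ of the hexagon directly from the explicit side-length formulas just derived, reducing the whole claim to a single inequality between the side length $a$ of $T_1$ and the largest width $h_3$. The first step is purely formal: since $h_1\leq h_2\leq h_3$ by hypothesis~1, the three expressions are monotone, because $AB-CD=\frac{2}{\sqrt 3}(h_2-h_1)\geq 0$ and $CD-EF=\frac{2}{\sqrt 3}(h_3-h_2)\geq 0$. Hence $AB\geq CD\geq EF$, and it suffices to prove $AB\leq 1$.

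The key geometric input is a lower bound on $a$. As $T_1$ is a positive homothety of the unit regular triangle with side length $a$, its width in the direction orthogonal to any side equals its altitude $\frac{\sqrt 3}{2}a$. Since $X\subseteq T_1$, the width $h_3$ of $X$ orthogonal to side $3$ cannot exceed this altitude, which gives $h_3\leq\frac{\sqrt 3}{2}a$, equivalently $a\geq\frac{2}{\sqrt 3}h_3$. Only the containment $X\subseteq T_1$ is needed here; minimality of $T_1$ does not even enter this direction of the estimate.

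Substituting this lower bound into $AB=\frac{2}{\sqrt 3}(h_2+h_3)-a$ cancels the $h_3$-contribution, yielding $AB\leq\frac{2}{\sqrt 3}(h_2+h_3)-\frac{2}{\sqrt 3}h_3=\frac{2}{\sqrt 3}h_2$. Finally, hypothesis~2 gives $h_2\leq\frac{\sqrt 3}{2}$, so $AB\leq\frac{2}{\sqrt 3}\cdot\frac{\sqrt 3}{2}=1$, which together with the monotonicity of the first step proves $\max(AB,CD,EF)=AB\leq 1$.

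I expect no real obstacle in this claim: the only point requiring care is the correct identification of the width of $T_1$ orthogonal to a side with its altitude $\frac{\sqrt 3}{2}a$, and the observation that $h_3$ is genuinely the maximal width (guaranteed by hypothesis~1). It is worth noting that hypothesis~3 plays no role in this particular claim; it will presumably be invoked later, when the three covering translates of $T$ are actually extracted from the hexagon $ABCDEF$.
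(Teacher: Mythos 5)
Your proof is correct and is essentially the paper's own argument: both reduce the claim to $AB\leq\frac{2}{\sqrt 3}h_2\leq 1$, the only cosmetic difference being that your width comparison $a\geq\frac{2}{\sqrt 3}h_3$ is exactly the non-negativity of the hexagon side $BC=a-\frac{2h_3}{\sqrt 3}$, which is what the paper invokes when it writes $AB=\frac{2h_2}{\sqrt 3}-BC\leq\frac{2h_2}{\sqrt 3}$.
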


\begin{proof} Indeed,
$$\max(AB, CD, EF) = \frac{2}{\sqrt{3}}(h_2 + h_3) - a = \frac{2h_2}{\sqrt{3}} - BC \leq \frac{2h_2}{\sqrt{3}} \leq 1. $$
\end{proof}

If $a\leq 1$, then the hexagon $ABCDEF$ is completely covered by one translate of $T$ (namely $T_1$). Since it contains the set $X$, in this case lemma is proved.
So we can assume $a>1$.

Now we want to choose points $K, L, M$ on $BC, DE, FA$ respectively so that

\begin{eqnarray*}
MA + AB + BK  & \leq & 1,\\
KC + CD + DL  & \leq & 1, \\
LE + EF + FM  & \leq &  1.
\end{eqnarray*}

Without loss of generality, assume that the order of points $A, B, C, D, E, F$ on the boundary of $ABCDEF$ is clockwise. We construct auxiliary points $M^* = A$ and $K^*$ and $L^*$ the points on the boundary of $ABCDEF$ such that the distance from $M^*$ to these points in the clockwise and counterclockwise
direction respectively along the boundary of $ABCDEF$ is equal to 1.  We will use these points to construct $K,L,M$.

First we prove that $L^*$ lies on $DE$. Indeed,
\[
M^*F + FE = AF + FE = 2h_1/\sqrt{3} \leq 1,
\]
and
 \[
 M^*F + FE + ED = AF + FE + ED = a > 1.
 \]

Observe that, according to the claim above, the arc $M^*K^*$ of $ABCDEF$ (the direction from $M^*$ to $K^*$ is clockwise) contains $B$. So there are 2 cases.

\medskip
{\bf Case 1.} $K^*\in BC$. Then we can take $K = K^*$, $L = L^*$, $M = M^*$. To check these points work, it is enough to verify the inequality
$$AB + BC + CD + DE + EF + FA \leq 3.$$
Indeed, since 2 arc lengths are equal to 1, the length of the remaining arc does not exceed 1. But
$$AB + BC + CD + DE + EF + FA = \frac{2}{\sqrt{3}}(h_1 + h_2 + h_3) \leq 3,$$
as we wanted.

\medskip
{\bf Case 2.} The clockwise directed arc $M^*K^*$ contains $C$. Then we start to move $M^*$ from $A$ to $F$ (together with $K^*$ and $L^*$) until one of the
following events happens.

\medskip
{\bf Case 2.1.} $L^*$ reaches $D$ before $K^*$ reaches $C$. Then set $K = C, L = D, M = M^*$. The length of the clockwise directed arc $MK$ does
not exceed 1, because $K^*$ did not reach $C$. The length of the counterclockwise directed arc $ML$ equals 1, and the length of the clockwise directed
arc $KL$ is at most 1 according to the claim above.

\medskip
{\bf Case 2.2.} $K^*$ reaches $C$ before $L^*$ reaches $D$. In this case we can take $K = K^*$, $L = L^*$, $M = M^*$. The proof is analogous to the one for case 1.

At least one of the subcases 2.1 or 2.2 will happen. Indeed,
$$FE + ED = 2h_2/\sqrt{3} \leq 1,$$
thus $L^*$ reaches $D$ before $M^*$ reaches $F$.

Now, given points $K, L, M$ construct 3 regular triangles $Q_1,
Q_2, Q_3$ positively homothetic to $T$ as shown in Figure
\ref{f1-triangles}.

\begin{figure}[h]

      \centerline{\includegraphics[width=0.5\textwidth]{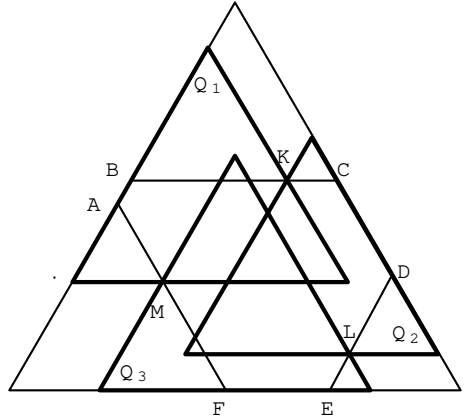}}
      \caption{Construction of triangles}\label{f1-triangles}

\end{figure}%

Namely, one of the triangles contains $AB$, $K$ and $M$ on the
boundary, the other 2 are constructed in a similar way.

Note that the side-length of $Q_1$ is equal to $MA + AB + BK$,
which is at most $1$ by construction. The same happens for $Q_2$
and $Q_3$. Moreover, if these triangles do not cover $ABCDEF$,
there is one of the 2 cases shown in figure \ref{f2-triangles}. 

The situation in Figure \ref{f2-triangles}a) will be called {\it counterclockwise
hole}, respectively, the situation in Figure~\ref{f2-triangles}b) will be called {\it clockwise hole}. 
We have chosen these terms (counterclockwise/clockwise hole) for the following reason: the counterclockwise (respectively, clockwise) hole increases 
its size if the triangles $Q_1$, $Q_2$ and $Q_3$ move counterclockwise (respectively, clockwise) at the same speed along the sides of the triangle $T_1$.

\begin{figure}[h]

      \centerline{\includegraphics[width=0.75\textwidth]{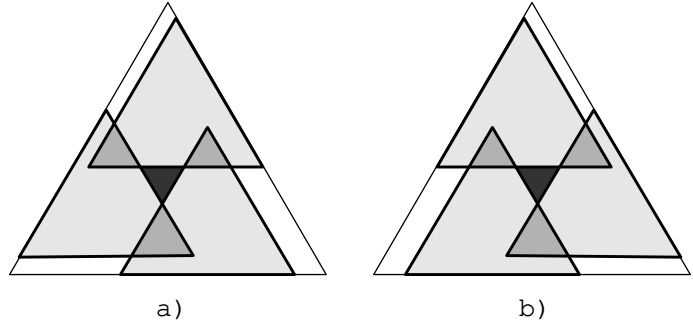}}
      \caption{Two types of holes}\label{f2-triangles}

\end{figure}%

Now we start to move the points $K$, $L$ and $M$ simultaneously at the same speed in the counterclockwise direction until the first of the following three events happens: $K$ reaches $B$, or $L$ reaches $D$, or $M$ reaches $F$. Without loss of generality, assume that $M$ = $F$.
 
Consider the construction of the triangles $Q_1$, $Q_2$ and $Q_3$ that uses the new positions of $K$, $L$ and $M$.
Since $M = F$, the lower sides of $Q_1$ and $Q_3$ lie on the lower side of $T_1$. Then a hole exists if and only if there is an intersection point of the 
right side of $Q_1$ with the left side of $Q_3$, and the lower side of $Q_2$ is higher then this intersection point.

Notice that such a hole is necessarily counterclockwise. Hence, if $M = F$ (or, similarly, $K = B$, or $L = D$), then a clockwise 
hole cannot exist.

Thereafter begin to move  the points $K$, $L$ and $M$ back, i.e., clockwise, at the same speed. We continue the motion until 
the first of the following events happens: $K$ reaches $C$, or $L$ reaches $E$, or $M$ reaches $A$. Similarly to the above, one can check that the 
triangles $Q_1$, $Q_2$ and $Q_3$ constructed for the new position of $K$, $L$ and $M$ cannot produce a counterclockwise hole.

We claim that while moving the points $K$, $L$ and $M$ we had such a position that the triangles $Q_1$, $Q_2$ and $Q_3$ constructed for 
this position produced no hole. If there was no hole at one of the ultimate positions of $K$, $L$ and $M$ (first --- with $M = F$, or $K = B$, or $L = D$, or
second --- with $M = A$, or $K = C$, or $L = E$), then there is nothing to prove. If there were holes at both ultimate positions, then, as we have proved, 
these were holes of different types. Hence, while moving $K$, $L$ and $M$ we had to switch the type of the hole. But two types of holes cannot exist
simultaneously, therefore for some position of $K$, $L$ and $M$ the triangles $Q_1$, $Q_2$ and $Q_3$ produced no hole at all.

But if the triangles $Q_1$, $Q_2$ and $Q_3$ produce no hole, then they cover the hexagon $ABCDEF$. Hence they cover the set $X$.
\end{proof}

\begin{proof}[Proof of Lemma \ref{lemma-triangles-2}]
Denote by $m$ a line perpendicular to $\ell$. Choose a positive
(right) direction on $m$. The orthogonal projection of $\conv X_i$
onto $m$ is an interval $I_i$ of length $w_i$.

Let $z_i$ be the midpoint of $I_i$ ($i = 1,2$). Without loss of generality, assume that $z_2$ lies on the right half-line with respect to $z_1$.
Denote by $y_1$ the left endpoint of $I_1$ and by $y_2$ the right endpoint of $I_2$. Then we have
$$|y_2 - y_1| = |y_2 - z_2| + |z_2 - z_1| + |z_1 - y_1| \geq \frac{w_1 + w_2}{2} > w.$$

Obviously, $y_1$ and $y_2$ have pre-images in $X_1$ and $X_2$ respectively under the orthogonal projection onto $\ell$. Denote these pre-images by
$x_1$ and $x_2$ respectively. The width of $\{x_1, x_2\}$ in the direction of $\ell$ is exactly $|y_2 - y_1|$, i.e. greater than $w$,
hence the statement of Lemma \ref{lemma-triangles-2}.

\end{proof}

\section{Proof of Theorem \ref{thm:four-colors}}\label{section:final-remark}

In order to show that Dol'nikov's conjecture holds if we use four
colours instead of three, we will need the following argument that
Karasev used in \cite{Kar08}.

First, we need the following construction.  Let $\mathcal F$ be a family of pairwise intersecting translates of a compact convex set $K$ in
the plane. For every triple $\{K_1, K_2, K_3\}\subset \mathcal F$
with $K_1\cap K_2\cap K_3=\emptyset,$ there is a triangle
$\triangle x_1x_2x_3$ with $x_1\in K_1,$ $x_2\in K_2,$ $x_3\in
K_3,$ and $\text{int}(\triangle x_1x_2x_3)\cap (K_1\cup K_2\cup
K_3)=\emptyset$, such that the line through $x_1$ and parallel to
$[x_2,x_3]$ is a supporting line of $K_1,$ and similarly for the
lines through $x_2,$ $x_3,$ which are parallel to $[x_1,x_3]$ and
$[x_1,x_2],$ respectively.  The following claim is proved in \cite{Kar08}.

\begin{claim}
Consider all triples
$\{K_1,K_2,K_3\}\subset\mathcal F$ and their respective triangles
$\triangle x_1x_2x_3$. Let $\triangle
x_1'x_2'x_3'$ be one of such triangles with maximal area. Then, any other member of
$\mathcal F$ contains one point of the set $\{x_1',x_2',x_3'\}.$
\end{claim}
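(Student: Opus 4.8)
The plan is to argue by contradiction, exploiting the \emph{global} maximality of the area of $\triangle x_1'x_2'x_3'$. First I would record the geometric picture implied by the construction. Writing $\ell_i$ for the supporting line through $x_i'$ parallel to the opposite side, and $H_i^+$ for the closed half-plane bounded by $\ell_i$ that does \emph{not} contain the interior of $\triangle x_1'x_2'x_3'$, the empty-interior condition forces $K_i\subseteq H_i^+$, with $x_i'\in K_i\cap\ell_i$ the contact point (in general position the unique point of $K_i$ on $\ell_i$). The three lines $\ell_1,\ell_2,\ell_3$ bound the antimedial triangle $\triangle Y_1Y_2Y_3$ of $\triangle x_1'x_2'x_3'$, so that $x_1',x_2',x_3'$ are the midpoints of its sides and $Y_i=\ell_j\cap\ell_k$. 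I would also note that the triangle guaranteed by the construction may be taken to be the \emph{maximal-area inscribed triangle} of its triple, since the supporting-line and empty-interior properties are exactly its first-order optimality conditions; hence $\triangle x_1'x_2'x_3'$ is, under the stated genericity, the area-maximiser over all inscribed triangles of all triples with empty common intersection.

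Now suppose some $D\in\mathcal F$ contains none of $x_1',x_2',x_3'$; I aim for a contradiction. The key enlargement step is this: since $\mathcal F$ is pairwise intersecting, pick $z_1\in D\cap K_1$. Because $z_1\in K_1\subseteq H_1^+$, $z_1\neq x_1'$, and $x_1'$ is the unique point of $K_1$ on $\ell_1$, the point $z_1$ lies strictly in the open outer half-plane beyond $\ell_1$. The triangle $\triangle z_1x_2'x_3'$ then shares the base $[x_2',x_3']$ with $\triangle x_1'x_2'x_3'$ but has strictly larger height, hence strictly larger area, and it is inscribed in the triple $\{D,K_2,K_3\}$. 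The same construction with $D$ replacing $K_2$ or $K_3$ produces strictly larger inscribed triangles for $\{K_1,D,K_3\}$ and $\{K_1,K_2,D\}$.

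To convert this into a contradiction with maximality I need one of these three \emph{replaced} triples to have empty common intersection, since only such triples carry an admissible (special) triangle. If, say, $\{D,K_2,K_3\}$ has empty common intersection, then its maximal-area inscribed triangle has area at least $\mathrm{area}(\triangle z_1x_2'x_3')>\mathrm{area}(\triangle x_1'x_2'x_3')$, contradicting global maximality; and symmetrically for the other two triples.

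The remaining case --- all three replaced triples having a common point --- is the main obstacle, and I would dispose of it by a separate convexity observation rather than by enlargement. If $w_1\in D\cap K_2\cap K_3$, $w_2\in K_1\cap D\cap K_3$ and $w_3\in K_1\cap K_2\cap D$ all exist, then $w_1\in H_2^+\cap H_3^+$, which is exactly the vertex wedge of the antimedial triangle opposite the body-triangle at $Y_1$, and likewise $w_2,w_3$ lie in the opposite wedges at $Y_2,Y_3$. A transversal selection from these three opposite wedges has convex hull containing $\triangle Y_1Y_2Y_3$, hence containing each midpoint $x_j'$; since $w_1,w_2,w_3\in D$ and $D$ is convex, $\conv\{w_1,w_2,w_3\}\subseteq D$, so $D$ contains some $x_j'$, contradicting the assumption that $D$ avoids all three vertices. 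Thus every case yields a contradiction and the claim follows. Beyond this case split, the only delicate points are the strictness of the height increase when $K_1$ has a flat contact along $\ell_1$, and the admissibility of the maximal-area inscribed triangle as one of the construction's respective triangles; both are handled by a general-position (small perturbation) assumption on $\mathcal F$.
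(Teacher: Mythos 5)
The paper itself does not prove this claim --- it quotes it from Karasev's work \cite{Kar08} --- so your argument has to stand on its own, and it does not. The fatal step is the assertion in your first paragraph that the construction's triangle ``may be taken to be the maximal-area inscribed triangle of its triple, since the supporting-line and empty-interior properties are exactly its first-order optimality conditions.'' This is backwards. If $\triangle x_1x_2x_3$ maximises area subject to $x_i\in K_i$, then $x_1$ maximises the distance to the line $x_2x_3$ over $K_1$, so all of $K_1$ lies within that distance of the line; in particular $K_1$ lies in the closed half-plane bounded by $\ell_1$ that \emph{contains} the triangle. The construction instead demands $K_1\subseteq H_1^+$, the half-plane \emph{away} from the triangle; under your genericity assumption the two conditions are incompatible unless $K_1$ degenerates into $\ell_1$. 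In fact, since $K_i\subseteq H_i^+$, the respective triangle is a vertex-wise area \emph{minimiser} among inscribed triangles of its own triple --- your own enlargement step proves exactly this, because any $z_1\in K_1\setminus \ell_1$ (not only $z_1\in D\cap K_1$) already gives a strictly larger triangle inscribed in the same triple $\{K_1,K_2,K_3\}$. Consequently the quantity you contradict in the key step (the maximum of area over \emph{all} inscribed triangles of all empty-intersection triples) is in general strictly larger than the quantity the Claim actually maximises (areas of respective triangles only), so exhibiting a large inscribed triangle in $\{D,K_2,K_3\}$ contradicts nothing. If instead you meant ``maximal-area inscribed triangle'' in the constrained sense (interior avoiding the union of the three sets), the identification becomes plausible, but then the inequality in your key step is unjustified: you never show that $\triangle z_1x_2'x_3'$ has interior disjoint from $D\cup K_2\cup K_3$, and generally it does not. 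Either way, the case where a replaced triple has empty intersection is not handled; what is needed there is that the \emph{respective} triangle of $\{D,K_2,K_3\}$ is larger than $\triangle x_1'x_2'x_3'$, which your enlargement does not provide.

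There is also a structural reason why this cannot be repaired without a genuinely new input: nowhere do you use that the members of $\mathcal F$ are translates of one convex body --- only convexity, pairwise intersection, and the construction's properties. The Claim is false at that level of generality. Take $n\geq 7$ pairwise intersecting, very eccentric ellipses whose major axes form $n$ lines in general position: for thin enough ellipses every triple has empty common intersection and admits a respective triangle (close to the medial triangle of the central hole), yet any point lies in at most two ellipses, so at least $n/2>3$ points are needed to pierce the family; hence the three vertices of the maximal respective triangle cannot meet every member. Any correct proof, in particular Karasev's in \cite{Kar08}, must therefore exploit the translate hypothesis, and the place it must enter is precisely the step identified above. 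For what it is worth, the rest of your skeleton is sound: the enlargement observation is correct, and your final wedge argument --- points $w_i$ lying in the three wedges vertically opposite $\triangle Y_1Y_2Y_3$ at its vertices satisfy $\conv\{w_1,w_2,w_3\}\supseteq \triangle Y_1Y_2Y_3$, so $D$ would contain some $x_j'$ --- is correct (one can verify it in barycentric coordinates) and cleanly disposes of the case where all three replaced triples have common points. The gap is confined to, but fatal in, the other case.
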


\begin{proof}[Proof of Theorem \ref{thm:four-colors}]
Let $\mathcal F_1,$ $\mathcal F_2,$ $\mathcal F_3,$ $\mathcal
F_4,$ be four families of translates of $K$ such that any two
members of different family have non-empty intersection. Consider
all rainbow triples $\{K_1, K_2, K_3\}$ with empty intersection
and their respective triangles $\triangle x_1x_2x_3$.  Among
them, let $\triangle x_1'x_2'x_3'$ be a triangle with
maximal area, with $x_1'\in K_1,$ $x_2'\in K_2,$ $x_3'\in K_3$.
Without loss of generality suppose $K_1\in\mathcal F_1,$
$K_2\in\mathcal F_2$, $K_3\in\mathcal F_3$. Then, any member
$K_4\in\mathcal F_4$ contains one of the points $x_1',x_2',x_3'$.
\end{proof}

\section*{Acknowledgments}
The authors would like to thank Vladimir Dol'nikov for proposing
this problem and Imre B\'ar\'any for the support received at the
R\'enyi Institute, which made this collaboration possible.  We
would also like to thank Dol'nikov and B\'ar\'any for the helpful
discussions on the subject.  We appreciate the careful comments of the anonymous referee, which have greatly improved the quality of this paper.
\bibliographystyle{amsplain}

\bibliography{references}
\end{document}